\numberwithin{equation}{section}
\date{\today}
\newcommand{\pd}{\partial}
\newcommand{\q}{\qquad}
\newcommand{\w}{\mathbf{w}}
\newcommand{\x}{\mathbf{x}}
\newcommand{\y}{\mathbf{y}}
\newcommand{\z}{\mathbf{z}}
\newcommand\n{\mathbf{n}}
\newcommand\s{\mathbf{s}}
\newcommand\vb{\mathbf{v}}
\newcommand{\R}{\mathbb{R}}
\newcommand{\C}{\mathbb{C}}
\newcommand{\Con}{C_0^\infty}
\newcommand{\grad}{\nabla}
\newtheorem{Lemma}{Lemma}
\newtheorem{Theorem}{Theorem}
\newtheorem{Corollary}{Corollary}
\newtheorem{Remark}{Remark}
\newtheorem{Example}{Example}
\begin{document}

\title[Hardy's inequality and curvature]{Hardy's inequality and curvature}
\author[A.~Balinsky]{A.~Balinsky}
\author[W.~D.~Evans]{W.D. Evans}
\address{Cardiff School of Mathematics\\
Cardiff University\\
Cardiff, CF24 4AG, Wales, UK}
\email{BalinskyA@cardiff.ac.uk}
\email{EvansWD@cardiff.ac.uk}
\author[R.~T.~Lewis]{R.T. Lewis}
\address{Department of Mathematics\\
         University of Alabama at Birmingham\\
         Birmingham, AL 35294-1170\\
         USA}
\email{rtlewis@uab.edu}

\keywords{Hardy inequality, Distance function, Curvature, ridge, skeleton, uniformization}

\subjclass{Primary 35J85; Secondary 35R45, 49J40}

\begin{abstract}
{A Hardy inequality of the form
 \[
 \int_{\Omega} |\nabla f({\bf{x}})|^p d {\bf{x}} \ge \left(\frac{p-1}{p}\right)^p \int_{\Omega}
 \{1 + a(\delta, \partial \Omega)(\x)\}\frac{|f({\bf{x}})|^p}{\delta({\bf{x}})^p} d{\bf{x}},
 \]
 for all $f \in C_0^{\infty}({\Omega\setminus{\mathcal{R}(\Omega)}}),$ is considered for $p\in (1,\infty)$,
 where ${\Omega}$ is a domain in $\mathbb{R}^n$, $n \ge 2$, $\mathcal{R}(\Omega)$ is the \textit{ridge} of $\Omega$,
 and $\delta({\bf{x}})$ is the distance from ${\bf{x}} \in {\Omega} $
 to the boundary $ \partial {\Omega}.$ The main emphasis is on
 determining the dependance of $a(\delta, \partial {\Omega})$ on the geometric properties
 of $\partial {\Omega}.$ A Hardy inequality is also
 established for any doubly connected domain $\Omega$ in
 $\mathbb{R}^2$ in terms of a uniformization of $\Omega,$ that is,
 any conformal univalent map of $\Omega$ onto an annulus. }
\end{abstract}

\maketitle

\section {Introduction}

This paper is a contribution to the much studied Hardy inequality
\begin{equation}\label{Eq1}
    \int_{\Omega} |\nabla f(\x)|^p d \x \ge c(n,p,\Omega)
    \int_{\Omega} \frac{|f(\x)|^p}{\delta(\x)^p} d\x,\ \ \ f \in
    C_0^{\infty}(\Omega),
\end{equation}
where $\Omega$ is a domain (an open connected set) in $\R^n$,
 $n\ge 2$,  $1<p<\infty,$
and $\delta $ is the distance function
\[
\delta(\x):= \rm{dist}(\x, \R^n\setminus \Omega) = \inf_{\y \in
\R^n \setminus \Omega} |\x - \y|,\ \ \ \x \in \Omega.
\]
In order to put the problems we address in context and to
summarise our main results, we recall some of the highlights
amongst the known results to be found in the literature. For a
convex domain $\Omega$ in $\R^n, n\ge 2,$ the optimal constant in
(\ref{Eq1}) is
\begin{equation}\label{Eq2}
c(n,p,\Omega) = \left(\frac{p-1}{p}\right)^p;
\end{equation}
see \cite{MMP} and \cite{MatSob}. In all cases equality is only
achieved by $f=0.$ In the case $p=2$ the inequality was improved
by Br\'{e}zis and Marcus in \cite{BM} to one of the form
\begin{equation}\label{Eq3}
\int_{\Omega} |\nabla f(\x)|^2 d \x \ge \frac{1}{4}
    \int_{\Omega} \frac{|f(\x)|^2}{\delta(\x)^2} d\x +
    \lambda(\Omega) \int_{\Omega} |f(\x)|^2 d \x,
\end{equation}
where
\[
\lambda(\Omega) \ge \frac{1}{4 \rm{diam}(\Omega)^2}.
\]
Further improvements along these lines were made in \cite{HHL},
\cite{EL2}, \cite{T}, including ones for $p \in (1,\infty)$ in
\cite{EL2} and \cite{T}, and for Hardy-Sobolev inequalities in
\cite{FMT2}. Further pertinent references may be found in these
cited papers.

For non-convex domains, a sharp constant in (\ref{Eq1}) is not
known in general, but some sharp results were obtained in
\cite{FMT3}, \cite{D2} and \cite{T}. For a planar simply connected
domain $\Omega$, Ancona in \cite{A} proved the celebrated result
that
\begin{equation}\label{Eq4}
    c(2,2,\Omega) \ge \frac{1}{16}.
\end{equation}
By assuming certain ``quantifiable" degrees of convexity on a
simply connected, planar domain $\Omega $, Laptev and Sobolev in
\cite{LS} strengthened the Kobe \textit{one-quarter theorem} used
by Ancona in his proof and improved the lower bound in
(\ref{Eq4}). Other results of particular relevance to the present
paper are those in \cite{AL} for annular regions.

Our objective was to consider inequalities of the form
\begin{equation}\label{Eq5}
\int_{\Omega} |\nabla f(\x)|^p d \x  \ge c(n,p,\Omega)
\int_{\Omega} \left \{ 1 + a(\delta, \partial \Omega)(\x) \right
\} \frac{|f(\x)|^p}{\delta(\x)^p} d \x
\end{equation}
in which the function $a(\delta, \partial \Omega)$ depends on
$\delta$ and geometric properties of the boundary $\partial
\Omega$ of $\Omega$. We are particularly interested in domains
which are either convex or have convex complements. In these
cases, we determine $a(\delta,
\partial \Omega)$ explicitly in terms of $\delta$
and the principal curvatures of the boundary $\partial \Omega$ of
$\Omega$. Our analysis makes it necessary to consider the skeleton
$\mathcal{S}(\Omega)$ and ridge $\mathcal{R}(\Omega)$ of $\Omega$:
these will be defined in \S 2. A sample result is the following
special case of our Corollary~\ref{Cor2} where $\Omega$ is a
convex domain with $C^2$ boundary, $p=n=2$, and a condition on the
regularity of the ridge of $\Omega$ holds (see \ref{Sign}):
\begin{equation}\label{Eq6}
\int_{\Omega} |\nabla f(\x)|^2 d \x  \ge \frac{1}{4} \int_{\Omega}
\left \{ 1 + \left|\frac{2\kappa \delta}{1+\kappa
\delta}(\x)\right|\right \} \frac{|f(\x)|^2}{\delta(\x)^2} d \x,
\end{equation}
for $ f \in C_0^{\infty}(\Omega),$ where $\kappa $ is the
curvature of $\partial \Omega $. For $\Omega = B_R$, the open disc
of radius $R$ and center the origin, the condition (\ref{Sign})
holds, and the inversion $ \y = \x/|\x|^2,$ with $\rho = 1/R,$
yields
\begin{equation}\label{Eq7}
    \int_{\R^2 \setminus \overline{B_{\rho}}}|\nabla f(\y)|^2 d \y  \ge \frac{1}{4}
\int_{\R^2 \setminus \overline{B_{\rho}}} \left \{
-\frac{1}{|\y|^2} + \frac{1}{(|\y| -\rho)^2} \right \} |f(\y)|^2 d
\y.
\end{equation}
This inequality is given in \cite{AL}, Remark 1. A significant
feature of (1.6) with respect to (1.7) is that the presence of the
``alien" term $-1/|\y|^2$ in (1.7) is explained by the curvature
of the boundary. Other results in \cite{AL} are recovered from
theorems in Section 3 below by taking the convex sets involved
therein to be a ball. In Section 4 non-convex domains are
considered. Examples are given of Hardy inequalities on a torus
and on a 1-sheeted hyperboloid, which is unbounded with an
unbounded interior radius.

In Theorem~\ref{Thm3} we establish a Hardy inequality for any
doubly connected domain $\Omega$ in $\R^2$ in terms of a
uniformization of $\Omega$, i.e. any conformal univalent map of
$\Omega$ onto an annulus $B_R \setminus \overline{B_{\rho}}$ in
$\R^2.$ This is a rich source of examples of Hardy's inequalities
on non-convex domains. For example, Hardy's inequality is readily
derived for the domain
\[
\{z: \rho^2 < |\Phi(z)| < R^2\},\ \ \ z=x+iy,
\]
where $\Phi(z) = (z-1)(z+1)$. In this case $\sqrt{\Phi(z)}$ is an
appropriate uniformization.

The authors are grateful to Rupert Frank and Junfang Li for
comments on an earlier version of the paper which led to
significant improvements.

\section{Curvature and distance to the boundary}

The inequalities to be considered in the next section require the
determination of the Laplacian of the distance function in terms
of the principal curvatures of the boundary of the domain. We
first recall the following facts which may be found in \cite{EE},
Section 5.1. The \textit{skeleton} of a domain $\Omega$ is the set
\[
\mathcal{S}(\Omega):= \{ \x \in \Omega: {\rm{card}}\ N(\x) >1 \}
\]
where $N(\x) = \{ \y \in \pd \Omega: |\y-\x| = \delta(\x)\},$ the
set of  \textit{near points} of $\x$ on $\pd \Omega.$ The function
$\delta$ is differentiable at $\x$ if and only if $ \x \notin
\mathcal{S}(\Omega)$.  In $\Omega \setminus \mathcal{S}(\Omega),\
\nabla \delta $ is continuous and $ \nabla \delta (\x) = (\y -
\x)/|\y - \x|,$ where $ N(\x) = \{ \y\}.$ (When $N(\x)=\{\y\}$, we
sometimes abuse the notation and write $\y=N(\x)$.) Since $\delta$
is Lipschitz continuous, and hence differentiable almost
everywhere by Rademacher's Theorem, $\mathcal{S}(\Omega)$ is of
Lebesgue measure zero. In \cite{EE}, Corollary~5.1.4, it is shown
that if $\x\in\Omega$ and $\y\in N(\x)$ then
$N(\y+t[\x-\y])=\{\y\}$ for all $t\in (0,\lambda)$, where
$$
\lambda:=\sup\{t\in (0,\infty):\y\in N(\y+t[\x-\y])\}.
$$
The point $p(\x):=\y+\lambda[\x-\y]$ is called the {\it ridge
point} of $\x\in \Omega$ and
$\mathcal{R}(\Omega):=\{p(\x):\x\in\Omega\}$ is called the {\it
ridge} of $\Omega$. For further details and properties of
$\mathcal{S}(\Omega)$ and $\mathcal{R}(\Omega)$ we refer to
\cite{EE}, \S 5.1. In particular, note that $\mathcal{R}(\Omega)$
can be much larger than $\mathcal{S}(\Omega)$ and
$\mathcal{S}(\Omega)\subseteq \mathcal{R}(\Omega)\subseteq
\overline{\mathcal{S}(\Omega)}$. We shall be assuming throughout,
without further mention, that $\mathcal{R}(\Omega)$ is closed
relative to $\Omega$, and so
$\mathcal{R}(\Omega)=\overline{\mathcal{S}(\Omega)}$; it is proved
in \cite{EE}, Theorem 5.1.10, that this is equivalent to the
functions $p$ and $\delta\ o\ p $ being continuous on $\Omega$.
Note that in \cite{LN}, Theorem~1.1, it is proved that if $\Omega$
has a $C^{2,1}$ boundary (cf. next paragraph) then $\delta\ o\ p$
is Lipschitz continuous as a function defined on the boundary.

Let $\Omega$ be a domain in $\R^n, n \ge 2,$ with boundary $\pd
\Omega \in C^2$: this means that locally, after a rotation of
co-ordinates, $\pd \Omega$ is the graph
$x_n=\phi(x_1,x_2,\cdots,x_{n-1})$ of a function $\phi \in C^2.$
We consider a change of co-ordinates
\[
\Gamma: (s^1,s^2,\cdots,s^n) \rightarrow \x = (x^1,x^2,\cdots,x^n)
\]
defined for $\x \in \Omega$ by
\begin{equation}\label{Eq2.1}
\x = {\bf{\gamma}}(s^1,s^2,\cdots,s^{n-1}) + s^n
\n(s^1,s^2,\cdots,s^{n-1}).
\end{equation}
Here ${\bf{\gamma}}(s^1,s^2,\cdots,s^{n-1})\in \partial \Omega $,
and $\n( s^1,s^2,\cdots,s^{n-1})$ is the internal unit normal to
$\pd \Omega$ at $ {\bf{\gamma}}(s^1,s^2,\cdots,s^{n-1})$, i.e.
pointing in the direction of $\x$. The co-ordinates
$(s^1,s^2,\cdots,s^{n-1})$ are chosen with respect to principal
directions through the (unique) near point $N(\x)$ of $\x$ on $\pd
\Omega$, such that, with $ \s'=(s^1,s^2,\cdots,s^{n-1})$ and
\[
\frac{\pd \gamma}{\pd s^i} =: \vb_i = (v^1_i,v^2_i,\cdots,v^n_i),\
\ \ i=1,2,\cdots,n-1,
\]
we have
\begin{eqnarray}\label{Eq2.2}
  \langle \vb_i,\vb_j \rangle &=& \delta_{ij},\ \ \ \langle \vb_i, \n \rangle = 0,\ \ \  \nonumber \\
\frac{\pd \n(\s')}{\pd s^i} &=& \kappa_i(\s') \frac{\pd
    \gamma(\s')}{\pd s^i } = \kappa_i(\s') \vb_i(\s'),
\end{eqnarray}
where $\kappa_i, i=1,2,\cdots,n-1,$ are the \textit{principal
curvatures} of $\pd \Omega$ at the near point $\y$ of $\x$, and
the angular notation denotes scalar product. In (\ref{Eq2.2}), the
signs of the principal curvatures are determined by the direction
of the normal $\n$. If $\Omega$ is convex, the principal
curvatures of $\pd \Omega$ are non-positive, while if the domain
under consideration is $\bar\Omega^c = \R^n \setminus \bar\Omega$,
the principal curvatures are non-negative.

We set $s^n = \delta $; in (\ref{Eq2.1}), $\delta$ is equal to the
distance $\delta(\x)$ of $\x$ to $\pd \Omega$.

The following result (with $g(t)=t$) may be found in Gilbarg and
Trudinger~\cite{GT}, Lemma~14.17, for points close to the $C^2$
boundary of a bounded domain $\Omega$. For our reader's
convenience, we give our proof, which is designed for our needs.
Note the proof of Lemma~2.2 in \cite{LLL}, from which it follows
that if $\Omega$ has a $C^2$-boundary, then $\delta\in C^2$ on
$\Omega\setminus \mathcal{R}(\Omega)$. We have seen that
$\mathcal{S}(\Omega)$ is of zero measure and hence so is
$\mathcal{R}(\Omega)$ if $\mathcal{S}(\Omega)$ is closed. We
caution the reader that in \cite{GT} and \cite{LLL} computations
are made with respect to the outward unit normal (rather than the
inward unit normal as in this paper) causing a different sign for
the principal curvatures $\kappa_i$, $i=1,\dots,n-1$.

\begin{Lemma}\label{Lem1}
Let $\Omega$ be a domain in $\R^n$, $n\ge 2$, with $C^2$ boundary,
and set $\delta(\x) : =\rm{dist}(\x,\pd \Omega).$ Then $\delta\in
C^2(\Omega\setminus \mathcal{R}(\Omega))$, and for $g(\x) =
g(\delta(\x)), g \in C^2(\R^+),$
\begin{equation}\label{Eq2.3}
    \Delta_{\x} g(\x) = \frac{\pd^2g}{\pd \delta^2}(\x) +
    \sum_{i=1}^{n-1}\left(\frac{\kappa_i}{1+\delta \kappa_i}\right)
    \frac{\pd g}{\pd \delta}(\x),
\end{equation}
where the $\kappa_i$ are the principal curvatures of $ \pd \Omega$
at the near point $N(\x)$ of $\x$. The equation (\ref{Eq2.3})
holds for all $\x\in\Omega \setminus \mathcal{R}(\Omega)$.
\end{Lemma}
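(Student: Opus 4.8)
The plan is to work in the curvilinear coordinate system $(s^1,\dots,s^n)$ of \eqref{Eq2.1}, with $s^n=\delta$, and to express the Euclidean Laplacian through the Laplace--Beltrami formula in these coordinates. First I would compute the coordinate vector fields by differentiating \eqref{Eq2.1}. Using the structure equations \eqref{Eq2.2}, differentiation in the tangential directions gives $\partial\x/\partial s^i = \vb_i + \delta\,(\partial\n/\partial s^i) = (1+\delta\kappa_i)\vb_i$ for $i\le n-1$, while $\partial\x/\partial s^n = \n$. Since the $\vb_i$ are orthonormal and orthogonal to $\n$, the induced metric is diagonal, with $g_{ii}=(1+\delta\kappa_i)^2$ for $i\le n-1$, $g_{nn}=1$, and all off-diagonal entries vanishing. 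Consequently $\sqrt{g}=\prod_{i=1}^{n-1}(1+\delta\kappa_i)$, where the positive square root is taken on the region where $\Gamma$ is nondegenerate.

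The next step is to pin down that region and to secure the regularity of $\delta$. Off the ridge $\mathcal{R}(\Omega)$ the near point $N(\x)$ is unique and each factor $1+\delta\kappa_i$ is positive, so $\Gamma$ is a local diffeomorphism with $\sqrt{g}>0$; combined with $\gamma,\n\in C^1$ coming from $\pd\Omega\in C^2$, this yields $\delta\in C^2(\Omega\setminus\mathcal{R}(\Omega))$, the fact already recorded in the text via Lemma~2.2 of \cite{LLL}.

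The third step is to feed the diagonal metric into $\Delta = \tfrac{1}{\sqrt{g}}\sum_{i,j}\partial_{s^i}\!\big(\sqrt{g}\,g^{ij}\partial_{s^j}\big)$. For a function of $\delta$ alone, $g=g(\delta)=g(s^n)$, every tangential derivative vanishes, so only the $i=j=n$ term survives; since $g^{nn}=1$ this reduces to $\Delta g = \tfrac{1}{\sqrt{g}}\,\partial_\delta\!\big(\sqrt{g}\,\partial_\delta g\big)$. Expanding the product and using the logarithmic derivative $\partial_\delta\log\sqrt{g}=\sum_{i=1}^{n-1}\kappa_i/(1+\delta\kappa_i)$ then produces \eqref{Eq2.3} directly.

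The main obstacle, and the point demanding the most care, is the claim that the principal curvatures $\kappa_i$ depend only on the tangential coordinates $\s'=(s^1,\dots,s^{n-1})$ and not on $s^n=\delta$; this is exactly what permits $\partial_\delta\sqrt{g}$ to be computed treating the $\kappa_i$ as constants along the normal ray. It rests on the property, recalled in \S2 from \cite{EE}, that as $\x$ moves along the inward normal its near point on $\pd\Omega$ stays fixed until the ridge is reached, so the curvatures attached to that fixed near point are unchanged. Establishing that this coordinate description is valid precisely on $\Omega\setminus\mathcal{R}(\Omega)$, where $1+\delta\kappa_i>0$ for every $i$, is the geometric heart of the argument; once it is in place the remaining computation is routine.
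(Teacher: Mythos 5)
Your proof is correct, and it reaches (\ref{Eq2.3}) by a genuinely different route from the paper's, although both start from the same place: the Fermi coordinates (\ref{Eq2.1}), the Jacobian computation $\pd\x/\pd s^i=(1+\delta\kappa_i)\vb_i$ ($i\le n-1$), $\pd\x/\pd s^n=\n$ of (\ref{Eq2.5}), and the two geometric facts that the near point (hence each $\kappa_i$) stays fixed along an inward normal ray until the ridge is reached, and that $1+\delta\kappa_i>0$ on $\Omega\setminus\mathcal{R}(\Omega)$. After that the paper inverts the Jacobian matrix explicitly using the orthonormality in (\ref{Eq2.2}), obtaining $\pd s^i/\pd x^j=[1+\delta\kappa_i]^{-1}v_i^j$ and $\pd\delta/\pd x^j=n^j$ in (\ref{Eq2.7}), and then computes $\Delta\delta$, and $\Delta_\x g$ for $g=g(\delta)$, by two chain-rule calculations carried out in the Cartesian frame. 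You instead package the same information as the flat metric written in the coordinates $(\s',\delta)$ --- block-diagonal, normal component $1$, tangential block $(1+\delta\kappa_i)^2\delta_{ij}$ --- so that the divergence form of the Laplacian collapses, for functions of $\delta$ alone, to $J^{-1}\pd_\delta\!\left(J\,\pd_\delta g\right)$ with $J=\prod_{i=1}^{n-1}(1+\delta\kappa_i)$, and the curvature sum in (\ref{Eq2.3}) is exposed as the logarithmic derivative $\pd_\delta\log J$. Your route is shorter and more structural: it explains \emph{why} the first-order coefficient is the logarithmic derivative of the normal Jacobian, and it transfers verbatim to curved ambient spaces; the paper's route is more elementary, needing nothing beyond matrix inversion and the chain rule. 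One point you should tighten (a soft spot the paper's own setup shares): coordinates satisfying (\ref{Eq2.2}) throughout a neighbourhood --- simultaneously orthonormal and principal --- do not exist on a general hypersurface; they can be arranged only at the chosen base point, so your formula for the tangential block is guaranteed only along the normal ray through $N(\x)$, not on an open set. Your argument survives because $g_{in}=0$ ($i<n$) and $g_{nn}=1$ do hold identically (since $|\n|=1$ and $\langle \pd\x/\pd s^i,\n\rangle=0$), so for $f=f(\delta)$ the Laplace--Beltrami sum reduces to the single term $J^{-1}\pd_\delta(J\,\pd_\delta f)$, and because that term involves $\det g$ only through its restriction to the ray, where the near point, the principal frame and the $\kappa_i$ are frozen and your diagonal formula is exact. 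Making these two observations explicit would close the only gap in an otherwise sound argument.
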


\begin{proof}

From (\ref{Eq2.1}), for $i=1,2,\dots,n ,$
\begin{equation}\label{Eq2.4}
    \frac{\pd x^i}{\pd s^j} = \frac{\pd \gamma^i}{\pd s^j} + \delta \frac{\pd n^i}{\pd s^j},\ \ \
    j=1,2,\cdots n-1,\ \ \ \frac{\pd x^i}{\pd \delta} = n^i,
\end{equation}
and so, by (\ref{Eq2.2}),
\begin{equation}\label{Eq2.5}
    \frac{\pd \x}{\pd s^j} = (1+ \delta \kappa_j) \vb_j,\ j=1,2,\cdots, n-1,\ \ \ \frac{\pd \x}{\pd s^n} = \n.
\end{equation}
Therefore, on recalling that $s^n=\delta$
\begin{eqnarray}\label{Eq2.5A}
    \left(\begin{array}{ccc}
    1& \cdots& 0 \\
& \ddots &  \\
    0&\cdots& 1 \end{array} \right) &=&
    \left(\begin{array}{ccc}
    \frac{\pd x^1}{\pd s^1} &\cdots &   \frac{\pd x^1}{\pd s^n}\\
\vdots& &\vdots\\
     \frac{\pd x^n}{\pd s^1}&\cdots & \frac{\pd x^n}{\pd s^n} \end{array} \right)
     \left(\begin{array}{ccc}
    \frac{\pd s^1}{\pd x^1}& \cdots &   \frac{\pd s^1}{\pd x^n}\\
\vdots & &\vdots \\
     \frac{\pd s^n}{\pd x^1}&\cdots &  \frac{\pd s^n}{\pd x^n} \end{array}
     \right)\nonumber \\
&=&\left(\begin{array}{cccc}
   (1+\delta \kappa_1) v_1^1& \cdots &  (1+\delta \kappa_{n-1}) v_{n-1}^1& \ n^1\\
\vdots & & \vdots&\vdots \\
(1 +\delta \kappa_1) v_1^n&\cdots &  (1+\delta \kappa_{n-1}) v_{n-1}^n & n^n\end{array} \right)
 \nonumber \\
& & \times \left(\begin{array}{ccc}
    \frac{\pd s^1}{\pd x^1} &\cdots   &\frac{\pd s^1}{\pd x^n}\\
\vdots & & \vdots \\
     \frac{\pd s^n}{\pd x^1}&\cdots & \frac{\pd s^n}{\pd x^n} \end{array}
     \right)
\end{eqnarray}
It follows from (\ref{Eq2.2}) that
\begin{equation}\label{Eq2.6A}
\left(\begin{array}{ccc}
    (1+\delta \kappa_1)^{-1}v_1^1& \cdots &  (1+\delta \kappa_1)^{-1}v_1^n \\
    \vdots & & \vdots \\
     (1+\delta \kappa_{n-1})^{-1}v_{n-1}^1& \cdots &  (1+\delta \kappa_{n-1})^{-1}v_{n-1}^n \\
n^1& \cdots & n^n
     \end{array} \right)= \left(\begin{array}{cc}
    \frac{\pd s^1}{\pd x^1} \cdots   \frac{\pd s^1}{\pd x^n}\\
\vdots \\
     \frac{\pd s^n}{\pd x^1}\cdots  \frac{\pd s^n}{\pd x^n} \end{array}
     \right)
\end{equation}
Therefore, for $j=1,2,\cdots n, \ \ i=1,2,\cdots n-1,$
\begin{equation}\label{Eq2.7}
    \frac{\pd s^i}{\pd x^j} = [1+\delta \kappa_i]^{-1} v_i^j,\ \ \
    \frac{\pd \delta}{\pd x^j} = n^j
\end{equation}
and, employing the usual summation convention,
\begin{eqnarray*}
  \frac{\pd^2 \delta}{\pd (x^j)^2} &=& \frac{\pd n^j}{\pd s^i} \frac{\pd s^i}{\pd x^j}\\
   &=& \sum_{i=1}^{n-1} [1+ \delta \kappa_i]^{-1}v_i^j\frac{\pd
   n^j}{\pd s^i} + \frac{\pd n^j}{\pd \delta} n^j.
\end{eqnarray*}
Consequently
\begin{eqnarray}\label{Eq2.7A}
   \Delta \delta &=& \sum_{j=1}^{n}\left \{\sum_{i=1}^{n-1}[1+\delta \kappa_i]^{-1} v_i^j \frac{\pd n^j}{\pd s^i} +
   n^j \frac{\pd n^j}{\pd \delta}\right \} \nonumber\\
   &=& \sum_{i=1}^{n-1}[1+\delta\kappa_i]^{-1} \langle \vb_i,
   \frac{\pd \n}{\pd s^i}\rangle + \langle \n, \frac{\pd \n}{\pd
   \delta}
   \rangle \nonumber \\
   &=& \sum_{i=1}^{n-1}\kappa_i[1+\delta\kappa_i]^{-1}
\end{eqnarray}
by (\ref{Eq2.2}). From the Chain Rule, we have
\[
\frac{\pd g}{\pd x^j} = \frac{\pd g}{\pd s^i}\frac{\pd s^i}{\pd
x^j} + \frac{\pd g}{\pd \delta} \frac{\pd \delta}{\pd x^j} =
\frac{\pd g}{\pd \delta} \frac{\pd \delta}{\pd x^j}
\]
and, on using (\ref{Eq2.7}),
\begin{eqnarray*}
  \Delta_{\x} g &=& \frac{\pd}{\pd s^k}\left[ \frac{\pd g}{\pd \delta} \frac{\pd \delta}{\pd x^j}\right] \frac{\pd s^k}{\pd x^j}\\
  &=& \left[ \frac{\pd^2 g}{\pd s^k \pd \delta} n^j + \frac{\pd
  g}{\pd \delta} \frac{\pd n^j}{\pd s^k} \right] \frac{\pd
  s^k}{\pd x^j} \\
   &=& \frac{\pd^2 g}{\pd (\delta^2)} + \frac{\pd g}{\pd
   \delta} \sum_{j=1}^{n}\sum_{k=1}^{n-1}\kappa_k v_k^j [1+\delta \kappa_k]^{-1} v_k^j \\
   &=& \frac{\pd^2 g}{\pd (\delta^2)} + \frac{\pd g}{\pd
   \delta}  \sum_{k=1}^{n-1}\kappa_k [1+\delta \kappa_k]^{-1}.
\end{eqnarray*}
The lemma is therefore proved.
\end{proof}

\bigskip

\begin{Remark}\label{Remsigns2}
It follows from (\ref{Eq2.7}) that the terms $\left[\kappa_i/(1+
\delta \kappa_i)\right] (\y)$, $\y=N(\x)$, $ i=1,2,\cdots,n-1,$ in
(\ref{Eqsigns}), are the principal curvatures of the level surface
of $\delta$ through $\x$ at $\x$. Furthermore,
$\frac{1}{n-1}\sum_{i=1}^{n-1} \left[\kappa_i/(1+ \delta
\kappa_i)\right] (\y)$ is the mean curvature of this level surface
at $\x$.
\end{Remark}

\bigskip

\begin{Remark}\label{Remskeleton} If $\Omega$ is convex, then $S(\bar\Omega^c)=R(\bar\Omega^c)=
\emptyset$.
\end{Remark}

\bigskip

\begin{Remark}\label{RemSigns}
If $\Omega$ is a convex domain with a $C^2$-boundary, we have
noted that the principal curvatures of $\pd \bar\Omega^c$ are
non-negative and hence
\begin{equation}\label{Eqsigns}
\Delta \delta (\x)= \tilde{\kappa}(\y):=  \sum_{i=1}^{n-1}
\left[\kappa_i/(1+ \delta \kappa_i)\right] (\y) \ge 0 \ \ \rm{for
\ all} \ \ \x \in \bar\Omega^c
\end{equation}
where $\{\y\}=N(\x)$.

We claim that for $\Omega$ convex, we also have
\begin{equation}\label{Eqsigns2}
\Delta \delta (\x)= \tilde{\kappa}(\y):=  \sum_{i=1}^{n-1}
\left[\kappa_i/(1+ \delta \kappa_i)\right] (\y) \le 0 \ \  \rm{for
\ all}\ \   \x \in \Omega \setminus \mathcal{R}(\Omega).
\end{equation}
To see this, let $\x_0$ be an arbitrary point in $\Omega, \y_0 =
N(\x_0)$, and let $ h(s)$ be a principal curve through $\y_0$ on
$\pd \Omega$ with curvature $\kappa$ at $\y_0$: thus
\[
h(0) =\y_0,\ \ |h'(0)|=1,\ \ h''(0) = -\kappa \n(0),
\]
where $\n$ is the {\bf inward normal} to $\pd \Omega$ at $\y_0.$ The
function
\[
f(s) := | h(s) - \x_0|^2
\]
has a minimum at $s=0$ and so at $s=0,$ we have
\begin{eqnarray*}
  f'(s) &=& 2 h'(s) \cdot [h(s)-\x_0] = 0 \\
  f''(s) &=& 2 h''(s) \cdot [h(s)- \x_0] + 2 |h'(s)|^2\ge 0.
\end{eqnarray*}
Consequently, since $  [h(0) -\x_0 ]= -\delta \n ,$ we have
\[
1+ \delta \kappa \ge 0.
\]
This is true for all principal directions and so, since the
principal curvatures are non-positive, our claim (\ref{Eqsigns2})
is established.

\end{Remark}

\bigskip

\begin{Remark}\label{Rem4}
If $\kappa_i(\y)\ge 0$, then
\begin{equation}\label{positivity}
1+\delta(\x)\kappa_i(\y) \ge 1,\q N(\x)=\{\y\}.
\end{equation}
Therefore, if $\Omega$ is the complement of a closed convex domain
with $C^2$ boundary, then (\ref{positivity}) holds for all $i$
throughout $\Omega$, since then $\mathcal{R}(\Omega) =
\mathcal{S}(\Omega) = \emptyset$.

Suppose $\Omega$ is convex and $\partial\Omega\in C^2$. Then for
all $\x\in \Omega\setminus \mathcal{R}(\Omega)$ and for all $i$
\begin{equation}\label{positivity2}
1+\delta(\x)\kappa_i(\y)=1-\delta(\x)|\kappa_i(\y)|>0, \q
N(\x)=\{\y\}.
\end{equation}
 This is proved as follows.
We saw in Remark~\ref{RemSigns} that $1+\delta \kappa_i\ge 0$.
Suppose for some $i$ and some $\x\in\Omega$ with $N(\x)=\{\y\}$,
 that $\delta(\x)=1/|\kappa_i(\y)|$. If $\x$ lies outside $\mathcal{R}(\Omega)$,  then it follows from \cite{EE}, Corollary~5.1.4,
 that there exists a point $\w\in \Omega\setminus \mathcal{R}(\Omega)$ on the ray from $\y$ through $\x$ such that
 $N(\w)=\{\y\}$ and $\delta(\w)>\delta(\x)$. But, this would imply that $1+\delta(\w)\kappa_i(\y)<0$
 which is a contradiction.

\end{Remark}

\bigskip

\begin{Remark}\label{Rem5}
Suppose $\Omega$ is convex with $\partial\Omega\in C^2$. Then
$\x\in \mathcal{R}(\Omega)\setminus \mathcal{S}(\Omega)$ if and
only if for $N(\x)=\{\y\}$
\begin{equation}\label{equality}
1+\delta(\x)\kappa_i(\y)=0,\q \text{for some\ \ \ } i.
\end{equation}
The ridge in this case has zero Lebesgue measure.
\end{Remark}

\bigskip

\section{Inequalities inside and outside domains}

We first establish the following general inequality

\begin{Theorem}\label{ThmP} Let $\Omega \subset\R^n, n \ge 2,$ be a domain having a ridge $\mathcal{R}(\Omega)$ and a sufficiently smooth boundary
 for Green's formula to hold. Let $\delta(\x) = \rm{dist}(\x, \R^n \setminus \Omega).$ Then for
 all $f \in C_0^{\infty}(\Omega\setminus \mathcal{R}(\Omega))$ and $p \in (1,\infty),$
\begin{equation}\label{EE}
\int_\Omega |\grad \delta\cdot \grad f|^pd\x \ge
\left(\frac{p-1}{p}\right)^p \int_\Omega \left \{1 - \frac{p \delta
\Delta\delta}{p-1} \right \} \frac{|f|^p}{\delta^{p}}d\x.
\end{equation}

\end{Theorem}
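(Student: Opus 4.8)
The plan is to establish \eqref{EE} through an integration-by-parts (Green's formula) argument applied to a suitably chosen vector field built from $\delta$, together with a pointwise elementary inequality to handle the nonlinearity coming from $p$. The natural test field to pair against $\grad f$ is a radial-in-$\delta$ multiple of $\grad\delta$. Since $\delta\in C^2(\Omega\setminus\mathcal{R}(\Omega))$ by Lemma~\ref{Lem1} and $f$ is compactly supported away from $\mathcal{R}(\Omega)$, all computations take place on the open set $\Omega\setminus\mathcal{R}(\Omega)$ where $\delta$ is smooth and $|\grad\delta|=1$, so there are no boundary contributions from $\mathcal{R}(\Omega)$ and Green's formula applies on $\supp f$.

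First I would reduce to a scalar one-variable identity. Writing $g=|f|^p$ (or working with $|f|^p/\delta^{p-1}$ as the quantity whose divergence is taken), I would compute the divergence of the field $\delta^{1-p}\,(\grad\delta)\,|f|^p$, i.e.
\[
\grad\cdot\!\left(\frac{\grad\delta}{\delta^{p-1}}|f|^p\right)
=\frac{(1-p)}{\delta^{p}}|f|^p|\grad\delta|^2
+\frac{\Delta\delta}{\delta^{p-1}}|f|^p
+\frac{p}{\delta^{p-1}}|f|^{p-2}f\,(\grad\delta\cdot\grad f).
\]
Using $|\grad\delta|=1$ and integrating over $\Omega$, the left side vanishes by Green's formula (the field is compactly supported in $\Omega\setminus\mathcal{R}(\Omega)$), giving the key identity
\[
(p-1)\int_\Omega\frac{|f|^p}{\delta^{p}}\,d\x
=\int_\Omega\frac{\Delta\delta}{\delta^{p-1}}|f|^p\,d\x
+p\int_\Omega\frac{|f|^{p-2}f}{\delta^{p-1}}(\grad\delta\cdot\grad f)\,d\x.
\]
This is exactly the step where the curvature term $\Delta\delta$ enters, and it matches the factor $1-\tfrac{p\delta\Delta\delta}{p-1}$ in the target once one divides by $(p-1)/p^p$.

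Next I would estimate the cross term. The remaining integral $p\int|f|^{p-2}f\,\delta^{1-p}(\grad\delta\cdot\grad f)$ must be bounded in terms of $\int|\grad\delta\cdot\grad f|^p$ and $\int|f|^p/\delta^p$; the clean way is Young's inequality in the form $p\,|u|^{p-1}|w|\le (p-1)\varepsilon^{p/(p-1)}|u|^p+\varepsilon^{-p}|w|^p$ with the sharp choice $\varepsilon=1$, applied pointwise with $u=f/\delta$ and $w=\grad\delta\cdot\grad f$. This produces a term $(p-1)\int|f|^p/\delta^p$ that can be combined with the left-hand side of the identity and a term $\int|\grad\delta\cdot\grad f|^p$ that becomes the left-hand side of \eqref{EE}. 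The constant $\left(\tfrac{p-1}{p}\right)^p$ emerges from balancing these two pieces optimally.

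The main obstacle I anticipate is twofold: first, justifying the vanishing of the boundary/divergence integral rigorously, which requires that the field $\delta^{1-p}(\grad\delta)|f|^p$ be genuinely admissible for Green's formula; this is where $f\in C_0^\infty(\Omega\setminus\mathcal{R}(\Omega))$ is essential, since it confines the support to the region where $\delta$ is $C^2$ and keeps $\delta$ bounded away from $0$, so no singular contribution arises near $\partial\Omega$ or near the ridge. Second, getting the optimal constant rather than a suboptimal one requires choosing the Young parameter correctly and verifying that the algebra collapses to $\left(\tfrac{p-1}{p}\right)^p$; I would treat the $p=2$ case first as a sanity check, where the cross term is handled by a plain Cauchy–Schwarz/completion-of-squares and the identity reduces to a transparent computation, and then carry the general $p$ through the same scheme with Young's inequality replacing completion of squares.
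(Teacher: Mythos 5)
Your route is, structurally, exactly the paper's: the paper takes $V=-p\nabla\delta/\delta^{p-1}$ in the identity $\int_\Omega(\mathrm{div}\,V)|f|^pd\x=-p\,\mathrm{Re}\int_\Omega(V\cdot\nabla f)|f|^{p-2}\overline{f}\,d\x$, computes $\mathrm{div}\,V=p(p-1)\delta^{-p}-p\delta^{1-p}\Delta\delta$ using $|\nabla\delta|=1$, and then estimates the right-hand side by H\"older followed by Young with a free parameter; your pointwise Young inequality applied to the same divergence identity is an equivalent packaging of that step. Your integrated identity
\begin{equation*}
(p-1)\int_\Omega\frac{|f|^p}{\delta^{p}}\,d\x
=\int_\Omega\frac{\Delta\delta}{\delta^{p-1}}|f|^p\,d\x
+p\int_\Omega\frac{|f|^{p-2}f}{\delta^{p-1}}(\nabla\delta\cdot\nabla f)\,d\x
\end{equation*}
is correct (for real $f$; for complex $f$ replace $f\,\nabla f$ by $\mathrm{Re}(\overline{f}\,\nabla f)$, which changes nothing downstream), and your justification that no boundary terms arise is fine.

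The genuine gap is the parameter choice: $\varepsilon=1$ does \emph{not} work, and with it your argument does not prove \eqref{EE}. Inserting $\varepsilon=1$ into your Young inequality gives $p|u|^{p-1}|w|\le(p-1)|u|^p+|w|^p$; substituted into the identity, the term $(p-1)\int_\Omega|f|^p/\delta^p\,d\x$ on the right \emph{exactly cancels} the left-hand side, leaving only
\begin{equation*}
\int_\Omega|\nabla\delta\cdot\nabla f|^p\,d\x\ \ge\ -\int_\Omega\delta\,\Delta\delta\,\frac{|f|^p}{\delta^{p}}\,d\x,
\end{equation*}
so the leading term $\left(\frac{p-1}{p}\right)^p\int_\Omega|f|^p/\delta^p\,d\x$ of \eqref{EE} is lost entirely; there is nothing left to ``balance'', contradicting your own closing remark that the constant emerges from optimal balancing. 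Keeping $\varepsilon$ free, your scheme yields
\begin{equation*}
\int_\Omega|\nabla\delta\cdot\nabla f|^p\,d\x\ \ge\ \int_\Omega\Bigl[(p-1)\varepsilon^{p}\bigl(1-\varepsilon^{p/(p-1)}\bigr)-\varepsilon^{p}\,\delta\Delta\delta\Bigr]\frac{|f|^p}{\delta^{p}}\,d\x,
\end{equation*}
and maximizing the constant term (set $t=\varepsilon^{p/(p-1)}$ and maximize $(p-1)t^{p-1}(1-t)$, giving $t=(p-1)/p$) forces $\varepsilon=\left(\frac{p-1}{p}\right)^{(p-1)/p}$. That choice produces the constant $\left(\frac{p-1}{p}\right)^{p}$ and the coefficient $\varepsilon^{p}=\left(\frac{p-1}{p}\right)^{p-1}=\left(\frac{p-1}{p}\right)^{p}\frac{p}{p-1}$ on $-\delta\Delta\delta$, i.e.\ exactly \eqref{EE}; it is the mirror image of the paper's choice $\varepsilon=[p/(p-1)]^{(p-1)/p}$ (the paper's $\varepsilon$ enters with the opposite powers). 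So the defect is a wrong parameter, not a wrong method: correct $\varepsilon$ as above and your proof coincides with the paper's.
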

\begin{proof}
For any vector field $V$ we have the identity
\begin{equation}\label{BE1}
 \int_{\Omega}
({\rm div} V) |f|^p d\x = -p \left[\rm{Re}
   \int_{\Omega}(V\cdot \nabla f) |f|^{p-2}
   \overline{f} d \x \right ]
   \end{equation}
   for all $f\in\Con(\Omega\setminus \mathcal{R}(\Omega))$.
Choose
\[ V = -p \nabla \delta / \delta^{p-1}.
\]
Then, for any $ \varepsilon >0,$
\begin{eqnarray*}
  \int_\Omega \rm{div} V|f|^p d\x
  & \le & p^2 \left(\int_\Omega |\nabla \delta \cdot \nabla f|^p d \x \right)^{1/p}
  \left(\int_\Omega \frac{|f|^p}{\delta^p} d \x\right)^{1-1/p} \\
   &\le & p \varepsilon^p \int_\Omega |\nabla \delta \cdot \nabla f|^p d
   \x + p(p-1)\varepsilon^{-p/(p-1)}\int_\Omega  \frac{|f|^p}{\delta^p}d
   \x
\end{eqnarray*}
which gives, since $ \rm{div} V = (p-1) p \delta^{-p} -p
\delta^{1-p} \Delta \delta $ for $\x\in \Omega\setminus
\mathcal{R}(\Omega)$,
\[
\int_\Omega |\nabla \delta \cdot \nabla f|^p d
   \x \ge \varepsilon^{-p} \int_\Omega \left[(p-1) -
   (p-1)\varepsilon^{-p/(p-1)}- \delta \Delta \delta \right]\frac{|f|^p}{\delta^p} d
   \x.
\]
The proof of (\ref{EE}) is completed on choosing $ \varepsilon =
[p/(p-1)]^{\frac{(p-1)}{p}}.$

\end{proof}

\begin{Corollary} If $\partial \Omega\in C^2$, then for all $f\in\Con(\Omega\setminus \mathcal{R}(\Omega))$
\begin{equation}\label{EE2}
\int_\Omega |\grad \delta\cdot \grad f|^pd\x \ge
\left(\frac{p-1}{p}\right)^p \int_\Omega \left \{1 - \frac{p \delta
\tilde \kappa}{p-1} \right \} \frac{|f|^p}{\delta^{p}}d\x
\end{equation}
where $\tilde\kappa:=\sum_{i=1}^{n-1}\frac{\kappa_i}{1+\delta\kappa_i}$.
\end{Corollary}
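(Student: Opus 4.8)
The plan is to recognize this as a direct specialization of Theorem~\ref{ThmP} in which the abstract quantity $\Delta\delta$ is replaced by its explicit curvature representation, valid under the stronger regularity hypothesis $\partial\Omega\in C^2$. There is essentially no new analytic content beyond identifying these two formulas; the work is purely in verifying that the hypotheses match and that the substitution is legitimate on the relevant region.

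First I would confirm that the hypothesis $\partial\Omega\in C^2$ supplies exactly what Theorem~\ref{ThmP} demands. Theorem~\ref{ThmP} requires a boundary ``sufficiently smooth for Green's formula to hold''; a $C^2$ boundary certainly meets this, and moreover—as noted in the remarks preceding Lemma~\ref{Lem1} (via the proof of Lemma~2.2 in \cite{LLL})—it guarantees $\delta\in C^2(\Omega\setminus\mathcal{R}(\Omega))$, so that $\Delta\delta$ is a genuine continuous function on the domain where the test functions live. Thus inequality (\ref{EE}) applies verbatim to every $f\in\Con(\Omega\setminus\mathcal{R}(\Omega))$ and $p\in(1,\infty)$.

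Next I would invoke Lemma~\ref{Lem1} in the special case $g(t)=t$. Then $\pd^2g/\pd\delta^2=0$ and $\pd g/\pd\delta=1$, so formula (\ref{Eq2.3}) collapses to
\[
\Delta\delta(\x)=\sum_{i=1}^{n-1}\frac{\kappa_i}{1+\delta\kappa_i}=\tilde\kappa(\x),
\]
holding pointwise for every $\x\in\Omega\setminus\mathcal{R}(\Omega)$, where the $\kappa_i$ are the principal curvatures at the (unique) near point $N(\x)$. Since $f\in\Con(\Omega\setminus\mathcal{R}(\Omega))$ vanishes in a neighbourhood of $\mathcal{R}(\Omega)$, the integrand on the right-hand side of (\ref{EE}) is supported precisely where this identity is valid; hence the factor $\delta\,\Delta\delta$ appearing in (\ref{EE}) may be replaced throughout the integral by $\delta\,\tilde\kappa$ without changing the value.

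Substituting $\Delta\delta=\tilde\kappa$ into (\ref{EE}) yields (\ref{EE2}) immediately. I do not anticipate any genuine obstacle here: the only point requiring a word of care is the matching of supports—ensuring that $\Delta\delta=\tilde\kappa$ is used only on $\Omega\setminus\mathcal{R}(\Omega)$, where both sides are defined and equal—and this is automatic from the prescribed class of admissible $f$. The corollary is therefore proved by concatenating Theorem~\ref{ThmP} with the $g(t)=t$ instance of Lemma~\ref{Lem1}.
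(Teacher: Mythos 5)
Your proposal is correct and coincides with the paper's own proof, which simply cites Lemma~\ref{Lem1} and Theorem~\ref{ThmP}; you have merely spelled out the substitution $\Delta\delta=\tilde\kappa$ on $\Omega\setminus\mathcal{R}(\Omega)$ and the support-matching detail that the paper leaves implicit.
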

\begin{proof}
The proof follows from Lemma~\ref{Lem1} and Theorem~\ref{ThmP}.

\end{proof}

In many cases, we are able to prove an inequality for all $f\in\Con(\Omega)$.

\begin{Theorem}\label{ThmP2} Assume the hypothesis of Theorem~\ref{ThmP}. Let $\mathcal{R}(\Omega)$ be the intersection of a
decreasing family of open neighborhoods $\{S_\epsilon:
\epsilon>0\}$ with smooth boundaries, and let $\eta_\epsilon(\x)$
denote the unit inward normal at $\x\in\partial S_\epsilon$. If
\begin{equation}\label{Sign}
(\grad\delta\cdot \eta_\epsilon)(\x) \ge 0, \q \x\in \partial S_\epsilon
\end{equation}
for all $\epsilon$ sufficiently small and
\begin{equation}\label{Sign2}
\frac{p-1}{p}\ge [\delta \Delta\delta](\x),\q \x\in \Omega\setminus R(\Omega),
\end{equation}
then (\ref{EE}) holds for all $f\in\Con(\Omega)$.
\end{Theorem}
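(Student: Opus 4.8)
The plan is to run the argument of Theorem~\ref{ThmP} on the shrinking family of domains $\Omega_\epsilon := \Omega\setminus\overline{S_\epsilon}$ and then let $\epsilon\to 0$. The obstruction to be overcome is clear: a function $f\in\Con(\Omega)$ need not vanish near $\mathcal{R}(\Omega)$, so the integration-by-parts identity (\ref{BE1}) used in the proof of Theorem~\ref{ThmP} acquires a boundary contribution over $\pd S_\epsilon$. Conditions (\ref{Sign}) and (\ref{Sign2}) are precisely what is needed to control, respectively, that boundary term and the passage to the limit.

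First I would fix $\epsilon$ small and apply Green's formula to the field $V = -p\,\grad\delta/\delta^{p-1}$ on $\Omega_\epsilon$. Since $S_\epsilon$ is an open neighbourhood of $\mathcal{R}(\Omega)$, the field $V$ is $C^1$ on $\overline{\Omega_\epsilon}\cap\supp f$ (by Lemma~\ref{Lem1}, $\delta\in C^2$ there, and $\delta>0$), while $f$ vanishes near $\pd\Omega$, so the only surviving boundary integral is over $\pd S_\epsilon$. Writing $\nu$ for the outward unit normal of $\Omega_\epsilon$, we have $\nu=\eta_\epsilon$ on $\pd S_\epsilon$, and (\ref{BE1}) is replaced by
\[
\int_{\Omega_\epsilon} (\mathrm{div}\,V)|f|^p\,d\x
= -p\,\mathrm{Re}\int_{\Omega_\epsilon}(V\cdot\grad f)|f|^{p-2}\overline{f}\,d\x
+ \int_{\pd S_\epsilon}(V\cdot\eta_\epsilon)|f|^p\,dS.
\]
Because $V\cdot\eta_\epsilon = -p(\grad\delta\cdot\eta_\epsilon)/\delta^{p-1}$, condition (\ref{Sign}) forces the boundary integral to be $\le 0$, so it may simply be discarded while the inequality keeps pointing the right way. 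The H\"older--Young chain of Theorem~\ref{ThmP} then applies verbatim on $\Omega_\epsilon$, yielding (\ref{EE}) with $\Omega$ replaced by $\Omega_\epsilon$.

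It remains to pass to the limit $\epsilon\to 0$, and here condition (\ref{Sign2}) is decisive: it guarantees that the right-hand integrand $\{1 - p\delta\Delta\delta/(p-1)\}|f|^p/\delta^p$ is nonnegative throughout $\Omega\setminus\mathcal{R}(\Omega)$. Since $\{S_\epsilon\}$ decreases to $\mathcal{R}(\Omega)$, the domains $\Omega_\epsilon$ increase to $\Omega\setminus\mathcal{R}(\Omega)$, which differs from $\Omega$ by a null set; the left-hand integrand $|\grad\delta\cdot\grad f|^p$ is likewise nonnegative and bounded, as $|\grad\delta|=1$ a.e. and $\supp f$ is compact. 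Monotone convergence applied to both sides then upgrades the $\Omega_\epsilon$-inequality to (\ref{EE}) on all of $\Omega$. Note that for every $\epsilon$ the right-hand side is dominated by the finite quantity $\int_\Omega|\grad\delta\cdot\grad f|^p\,d\x$, so no integrability problem arises in the limit even though $\Delta\delta$ may blow up towards the ridge.

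The step I expect to require the most care is the justification of Green's formula on $\Omega_\epsilon$ together with the identification of the relevant normal: one must check that $\pd S_\epsilon\subset\Omega\setminus\mathcal{R}(\Omega)$ (so that $\grad\delta$, and hence $\grad\delta\cdot\eta_\epsilon$, is genuinely defined there, consistent with (\ref{Sign})), and that the smoothness of $\pd S_\epsilon$ assumed in the hypothesis, combined with the compact support of $f$ in $\Omega$, legitimately removes any contribution from $\pd\Omega$. Everything else is either a direct transcription of the proof of Theorem~\ref{ThmP} or a routine monotone-convergence argument.
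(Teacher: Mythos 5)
Your proposal is correct and follows essentially the same route as the paper: Green's formula on $\Omega\setminus S_\epsilon$ with the boundary term over $\pd S_\epsilon$ discarded via (\ref{Sign}), the H\"older--Young argument of Theorem~\ref{ThmP} applied on the truncated domain, and then monotone convergence using the nonnegativity of the right-hand integrand guaranteed by (\ref{Sign2}). The only cosmetic difference is that the paper bounds the left-hand side by $\int_\Omega|\grad\delta\cdot\grad f|^p\,d\x$ immediately and applies monotone convergence only to the right-hand side, whereas you pass to the limit on both sides; this is immaterial.
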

\begin{proof} We proceed as in (\ref{BE1}), but now with $f\in\Con(\Omega)$, and account for the contribution of the boundary of $S_\epsilon$. On using (\ref{Sign}) we have for all $f\in\Con(\Omega)$
\begin{equation}\label{BE1A}
 \int_{\Omega\setminus S_\epsilon}
(\rm{div} V) |f|^p d\x \le -p \left[\rm{Re}
   \int_{\Omega\setminus S_\epsilon}(V\cdot \nabla f) |f|^{p-2}
   \overline{f} d \x \right ].
   \end{equation}
   On proceeding as in the proof of Theorem~\ref{ThmP} we obtain
$$\begin{array}{rl}
 \int_{\Omega} |\nabla \delta \cdot \nabla f|^p d\x \ge&
 \int_{\Omega\setminus S_\epsilon} |\nabla \delta \cdot \nabla f|^p d\x\\
 \ge & \left(\frac{p-1}{p}\right)^{p} \int_{\Omega}
   \left[1- \frac{p}{p-1}\delta \Delta \delta \right]\frac{|f|^p}{\delta^p} \chi_{_{\Omega\setminus S_\epsilon}} d\x. \end{array}
 $$
 The proof concludes on using (\ref{Sign2}) and the monotone convergence theorem.

\end{proof}

\begin{Corollary} \label{Cor2} Suppose that the hypothesis of Theorem~\ref{ThmP2} is satisfied and that $\Omega$ is
convex with a $C^2$ boundary. Then for all $f\in\Con(\Omega)$
\begin{equation}\label{Convex}
\int_\Omega |\grad \delta\cdot \grad f|^pd\x \ge
\left(\frac{p-1}{p}\right)^p \int_\Omega \left \{1 + \frac{p \delta |\tilde\kappa|}{p-1} \right \}
\frac{|f|^p}{\delta^{p}}d\x.
\end{equation}

\end{Corollary}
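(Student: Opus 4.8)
The plan is to obtain the Corollary by specializing the already-established Theorem~\ref{ThmP2} to a convex domain and substituting the explicit expression for $\Delta\delta$ furnished by Lemma~\ref{Lem1}. Since the hypothesis of Theorem~\ref{ThmP2} is assumed outright, inequality (\ref{EE}) holds for every $f\in\Con(\Omega)$, so the only remaining work is to rewrite the bracketed factor on its right-hand side in the claimed form involving $|\tilde\kappa|$. In particular, I do not expect to re-derive any inequality; the content is entirely a matter of identifying terms and fixing signs.

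First I would invoke Lemma~\ref{Lem1} with $g(t)=t$, so that $\pd^2 g/\pd\delta^2=0$ and $\pd g/\pd\delta=1$, which yields the identity
\[
\Delta\delta=\sum_{i=1}^{n-1}\frac{\kappa_i}{1+\delta\kappa_i}=\tilde\kappa
\]
at every $\x\in\Omega\setminus\mathcal{R}(\Omega)$. Next I would use convexity: by Remark~\ref{RemSigns}, equation (\ref{Eqsigns2}), the principal curvatures of $\pd\Omega$ are non-positive and $\tilde\kappa\le 0$ throughout $\Omega\setminus\mathcal{R}(\Omega)$. Hence $-\tilde\kappa=|\tilde\kappa|$ there, and substituting $\Delta\delta=\tilde\kappa=-|\tilde\kappa|$ into (\ref{EE}) converts the factor $1-\tfrac{p\delta\Delta\delta}{p-1}$ into $1+\tfrac{p\delta|\tilde\kappa|}{p-1}$, which is precisely (\ref{Convex}). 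Because $\mathcal{R}(\Omega)$ has zero Lebesgue measure (Remark~\ref{Rem5}), the pointwise substitution is harmless in the integrals.

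The proof is therefore essentially bookkeeping, and I anticipate no serious obstacle; the one thing worth checking explicitly is that the hypotheses of Theorem~\ref{ThmP2} are genuinely in force. Of these, the sign condition (\ref{Sign2}) comes for free under convexity: since $\delta>0$ and $\tilde\kappa\le 0$, one has $\delta\Delta\delta=\delta\tilde\kappa\le 0<\tfrac{p-1}{p}$ because $p>1$. Consequently only the normal condition (\ref{Sign}) along the shrinking neighborhoods $\pd S_\epsilon$ of the ridge must be supplied separately, and it is this regularity of the ridge—already subsumed in the hypothesis of Theorem~\ref{ThmP2}—that forms the only genuine requirement beyond convexity itself.
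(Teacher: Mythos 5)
Your proposal is correct and follows essentially the same route as the paper: identify $\Delta\delta=\tilde\kappa$ via Lemma~\ref{Lem1}, use convexity (Remark~\ref{RemSigns}) to get $\tilde\kappa\le 0$ so that the sign condition (\ref{Sign2}) holds automatically and $-\tilde\kappa=|\tilde\kappa|$, then conclude from Theorem~\ref{ThmP2}. Your added observations (the zero-measure of the ridge, the explicit check that only (\ref{Sign}) remains a genuine hypothesis) are harmless elaborations of the same argument.
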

\begin{proof}
Since $\Omega$ is convex, it follows from Lemma~\ref{Lem1} that
$-\Delta\delta = -\tilde\kappa\ge 0$ for $\x\in \Omega\setminus
\mathcal{R}(\Omega)$; see Remark~\ref{RemSigns}. Therefore,
(\ref{Sign2}) must hold and the result then follows from
Theorem~\ref{ThmP2}.

\end{proof}

\begin{Corollary}\label{Cor1a} Let $\Omega$ be a ball
$B_R:=\{\x\in\R^n:|\x|<R\}$. Then for $p>1,$
\begin{equation}\label{Eq12}
\int_{B_R}| \grad f|^pd\x -\left(\frac{p-1}{p}\right
)^p\int_{B_R}\frac{|f|^p}{\delta^p}d\x\ge
\left(\frac{p-1}{p}\right)^{p-1}\int_{B_R}\frac{(n-1)|f|^p}{|\x|\delta^{p-1}}d\x
\end{equation}
for all $f\in\Con(B_R)$.
\end{Corollary}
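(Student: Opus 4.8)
The plan is to specialize the general inequality of Corollary~\ref{Cor2} (or directly Corollary~1, i.e.~(\ref{EE2})) to the ball $B_R$ by computing the geometric quantities $\delta$, $\tilde\kappa$, and $\grad\delta\cdot\grad f$ explicitly, and then to convert the gradient-projection term $|\grad\delta\cdot\grad f|^p$ into the full gradient term $|\grad f|^p$. For the ball $B_R$, the near point of $\x$ is $R\x/|\x|$, so $\delta(\x)=R-|\x|$, the inward unit normal is $\grad\delta(\x)=-\x/|\x|$, the skeleton and ridge reduce to the single point $\{0\}$ (which has zero measure, so we may work on $B_R\setminus\{0\}$ and the $\Con$ class is unaffected), and each principal curvature of $\pd B_R$ computed with respect to the inward normal is $\kappa_i=-1/R$. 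A direct substitution into $\tilde\kappa=\sum_{i=1}^{n-1}\kappa_i/(1+\delta\kappa_i)$ then gives
\[
\tilde\kappa=(n-1)\frac{-1/R}{1-(R-|\x|)/R}=(n-1)\frac{-1/R}{|\x|/R}=-\frac{n-1}{|\x|},
\]
so that $|\tilde\kappa|=(n-1)/|\x|$ and $\delta|\tilde\kappa|=(R-|\x|)(n-1)/|\x|$.

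First I would verify that the hypotheses of Theorem~\ref{ThmP2} hold for $B_R$: the ridge is the single point $\{0\}$, which is the intersection of the decreasing family of small balls $S_\epsilon=B_\epsilon(0)$; on $\pd S_\epsilon$ the inward normal is $\eta_\epsilon=-\x/\epsilon$, so $(\grad\delta\cdot\eta_\epsilon)(\x)=(-\x/|\x|)\cdot(-\x/\epsilon)=|\x|/\epsilon>0$, giving (\ref{Sign}); and since $\Omega$ is convex, $\delta\Delta\delta=\delta\tilde\kappa\le 0$, so (\ref{Sign2}) holds automatically. Thus Corollary~\ref{Cor2} applies and yields
\[
\int_{B_R}|\grad\delta\cdot\grad f|^p\,d\x\ge\left(\frac{p-1}{p}\right)^p\int_{B_R}\left\{1+\frac{p\,\delta|\tilde\kappa|}{p-1}\right\}\frac{|f|^p}{\delta^p}\,d\x
\]
for all $f\in\Con(B_R)$. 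Substituting $\delta|\tilde\kappa|=(R-|\x|)(n-1)/|\x|$ and $\delta=R-|\x|$, the second term inside the braces becomes $\tfrac{p}{p-1}(n-1)(R-|\x|)/|\x|$, and dividing by $\delta^p=(R-|\x|)^p$ turns it into $\tfrac{p}{p-1}(n-1)/(|\x|(R-|\x|)^{p-1})$. Carrying the constant through, this term contributes exactly $(p-1)^{p-1}/p^{p-1}\cdot(n-1)/(|\x|\delta^{p-1})$, which is the right-hand side of (\ref{Eq12}).

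The remaining step, which I expect to be the only real obstacle, is to replace $|\grad\delta\cdot\grad f|^p$ on the left by $|\grad f|^p$, since (\ref{Eq12}) is stated with the full gradient. Because $\grad\delta=-\x/|\x|$ is a unit vector, the Cauchy--Schwarz inequality gives the pointwise bound $|\grad\delta\cdot\grad f|\le|\grad\delta|\,|\grad f|=|\grad f|$, hence
\[
\int_{B_R}|\grad f|^p\,d\x\ge\int_{B_R}|\grad\delta\cdot\grad f|^p\,d\x.
\]
Combining this with the specialized inequality above and moving the $\left(\tfrac{p-1}{p}\right)^p\int_{B_R}|f|^p/\delta^p\,d\x$ term to the left-hand side produces precisely (\ref{Eq12}). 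The only point requiring care is that $|\grad\delta|=1$ almost everywhere on $B_R$ (it fails only at the origin, which has measure zero), so the Cauchy--Schwarz estimate is valid as an integral inequality; no further regularity issues arise because $f\in\Con(B_R)$ and the integrands are integrable away from the single excluded point.
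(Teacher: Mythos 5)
Your proposal is correct and follows essentially the same route as the paper: verify the hypotheses of Theorem~\ref{ThmP2} for $B_R$ with $S_\epsilon=B_\epsilon$, $\grad\delta=-\x/|\x|=\eta_\epsilon$ on $\partial S_\epsilon$, compute $|\tilde\kappa|=(n-1)/|\x|$, and invoke Corollary~\ref{Cor2}. Your explicit treatment of the step $|\grad\delta\cdot\grad f|\le|\grad f|$ (valid since $|\grad\delta|=1$ off the ridge) is a detail the paper leaves implicit, but it is the same argument.
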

\begin{proof}
In this case we have that
$\mathcal{R}(B_R)=\overline{\mathcal{S}(B_R)}=\{0\}$ and $\delta
=R-|\x|$. We now have that $S_\epsilon=B_\epsilon$ and on
$\partial S_\epsilon$,
$\grad\delta=-\frac{\x}{|\x|}=\eta_\epsilon$. Therefore
(\ref{Sign}) holds implying that (\ref{Eq12}) is valid since
$|\tilde\kappa|=(n-1)/|\x|$.
\end{proof}

\bigskip

In \cite{FMT}, Theorem 3.1 , it is proved that for $\Omega \subset
\R^n$ convex,
\begin{equation}\label{FMT}
\int_\Omega |\grad u|^2d\x - \frac{1}{4}\int_\Omega
\frac{|u|^2}{\delta^2}d\x \ge c_{\alpha}
D_{int}^{-(\alpha+2)}\int_{\Omega} \delta^{\alpha}  |u|^2 d\x,
\end{equation}
for any $ \alpha > -2,$ where
\[
c_{\alpha} = \left \{ \begin{array} {ll} 2^{\alpha}(2 \alpha +3),
 & \ \rm{if}\ \ \alpha \ge -1 \\
2^{\alpha}(\alpha+2)^2, & \ \rm{if}\ \ \alpha \in
(-2,-1)\end{array} \right .
\]
and $D_{int}:=2\sup\{\delta(\x):\x\in\Omega\}$. A comparison of
the right-hand side of (\ref{FMT}), when $\Omega = B_R$, with that
in the case $p=2$ of (\ref{Eq12}),  is now made to seek further
evidence of the significance of the curvature in these
inequalities. Set $ \alpha = -2 + \varepsilon.$ Then the terms to
be compared from (\ref{Eq12}) and (\ref{FMT}), respectively, are $
I_1 = (n-1)/2\delta(\x) |\x| $ and $I_2 =
c_{\alpha}D_{int}^{-(\alpha+2)} \delta^{\alpha}(\x)$, with $
\delta(\x) = R-|\x|$. It is readily shown that
\[
I_1 - I_2 \ge \left \{ \begin{array}{ll} \frac{2n-1-2
\varepsilon}{4 \delta |\x|},\ & \rm{if} \ \varepsilon \ge 1,\\
\frac{(2n-2)R - (2n-1)|\x|}{4 \delta^2 |\x|}, \ & \rm{if}\
0<\varepsilon <1. \end{array} \right.
\]
A similar comparison can be made in the $L^p$ case using Theorem
3.2 of \cite{FMT} with $p=q $ and $ \alpha > -p.$ Also see
\cite{FMT2}.

\begin{Example}
{\it The infinite cylinder}. Let $\Omega=B_1(0)\times\R$, where
$B_1(0)$ is the unit ball, center the origin, in $\R^2$. Clearly,
$\Omega$ is convex and $R(\Omega)$ is the $z$-axis. The distance
function is $\delta = 1-\sqrt{x^2+y^2}$,
$$
\grad\delta = -(x,y,0)(1-\delta)^{-1},\ \ \ \  \eta=-(x,y,0)/(1-\delta)\ \ \text{on}\ \ \ \partial S_\epsilon,
$$
and $\Delta \delta=\frac{-1}{1-\delta}$, where $S_\epsilon:=\{\x=(x,y,z)\in\Omega:x^2+y^2<\epsilon\}$.
Therefore, (\ref{Convex}) holds for this cylinder with
$|\tilde\kappa|=|\Delta \delta|=\frac{1}{1-\delta}$.

\end{Example}

\begin{Theorem}\label{ThmP2A} Under the conditions of Theorem
\ref{ThmP}, we have for all $ f \in C_0^{\infty}(\Omega\setminus
\mathcal{R}(\Omega)),$
\begin{eqnarray}\label{Eq13}
    \int_{\Omega} |\nabla f(\x)|^2 d \x \ge
    \frac{1}{4}\int_{\Omega}\left \{\frac{(n-2)^2}{|\x|^2}\right.
    &+& \frac{(1+2|\delta     \Delta \delta|)}{\delta^2} \nonumber \\
  & +& \left.  2(n-2) \frac{\x \cdot \nabla
    \delta}{|\x|^2 \delta}     \right \} |f(\x)|^2 d \x.  \nonumber \\
\end{eqnarray}
In particular, if $ \Omega $ is a convex domain with a $C^2$
boundary, the conditions of the theorem are met and $ \Delta
\delta = \sum_{i=1}^{n-1} \kappa_i/(1+\delta \kappa_i) $ in
$\Omega\setminus \mathcal{R}(\Omega)$.
\end{Theorem}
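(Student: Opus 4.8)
The plan is to treat this $p=2$ statement by the vector-field (Rellich/Schr\"odinger) method, choosing a single field that simultaneously encodes the Hardy weight at the origin and the Hardy weight at the boundary. The starting point is the elementary identity: for any real vector field $V$ and any $f\in\Con(\Omega\setminus\mathcal{R}(\Omega))$, expanding the non-negative quantity $\int_\Omega|\grad f + Vf|^2\,d\x\ge 0$ and eliminating the cross term by the divergence identity (\ref{BE1}) with $p=2$ (so that $2\,\mathrm{Re}\int_\Omega \bar f\,V\cdot\grad f\,d\x = -\int_\Omega(\mathrm{div}\,V)|f|^2\,d\x$) gives
\[
\int_\Omega|\grad f|^2\,d\x \ge \int_\Omega\bigl(\mathrm{div}\,V - |V|^2\bigr)|f|^2\,d\x.
\]
Since $f$ is supported away from $\mathcal{R}(\Omega)$, the boundary and ridge contributions vanish exactly as in the proof of Theorem~\ref{ThmP}.

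Next I would take
\[
V = \frac{n-2}{2}\,\frac{\x}{|\x|^2} - \frac12\,\frac{\grad\delta}{\delta},
\]
the superposition of the optimal Hardy field for the distance to the origin (coefficient $(n-2)/2$) and the optimal field for the distance to the boundary (coefficient $-1/2$, as in the case $p=2$ of Theorem~\ref{ThmP}). Using $|\grad\delta|=1$ a.e.\ on $\Omega\setminus\mathcal{S}(\Omega)$, together with $\mathrm{div}(\x/|\x|^2)=(n-2)/|\x|^2$ and $\mathrm{div}(\grad\delta/\delta)=\Delta\delta/\delta-1/\delta^2$, a direct computation yields
\[
\mathrm{div}\,V - |V|^2 = \frac{(n-2)^2}{4|\x|^2} + \frac{n-2}{2}\,\frac{\x\cdot\grad\delta}{|\x|^2\delta} + \frac{1}{4\delta^2} - \frac{\Delta\delta}{2\delta}.
\]
Here the coefficient of $1/|\x|^2$ is $a(n-2)-a^2$ at the maximising value $a=(n-2)/2$, the cross term arises precisely from the mixed term of $|V|^2$, and the $1/\delta^2$ coefficient collapses to $1/4$. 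Inserting this into the displayed lower bound establishes (\ref{Eq13}) but with $1-2\delta\Delta\delta$ in place of $1+2|\delta\Delta\delta|$, namely
\[
\int_\Omega|\grad f|^2\,d\x \ge \frac14\int_\Omega\Bigl\{\frac{(n-2)^2}{|\x|^2} + \frac{1-2\delta\Delta\delta}{\delta^2} + 2(n-2)\,\frac{\x\cdot\grad\delta}{|\x|^2\delta}\Bigr\}|f|^2\,d\x.
\]

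The step I expect to be the crux is passing to the absolute value, because it is governed by the sign of $\Delta\delta$. Matching the cross term in (\ref{Eq13}), which is fixed with a plus sign, forces the coefficient $-1/2$ in front of $\grad\delta/\delta$, and this is exactly the choice that produces $-\Delta\delta/(2\delta)$ rather than $+\Delta\delta/(2\delta)$; the two contributions are controlled by the single parameter and cannot have their signs set independently. One therefore obtains $-\Delta\delta=|\Delta\delta|$ precisely when $\Delta\delta\le 0$ on the support of $f$, and for convex $\Omega$ with $C^2$ boundary this holds on $\Omega\setminus\mathcal{R}(\Omega)$ by Remark~\ref{RemSigns}, where moreover $\Delta\delta=\sum_{i=1}^{n-1}\kappa_i/(1+\delta\kappa_i)$ --- this is the ``in particular'' assertion of the theorem. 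For $n=2$ the radial field and the cross term vanish and the statement reduces to the case $p=2$ of the Corollary to Theorem~\ref{ThmP}. A minor technical point, to be dispatched by excising a small ball about the origin and letting its radius tend to zero, is the integrability of $\x/|\x|^2$ when the origin lies in $\Omega\setminus\mathcal{R}(\Omega)$; for $n\ge 3$ the associated spherical boundary term vanishes in the limit, and for $n=2$ its coefficient is already zero.
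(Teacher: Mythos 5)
Your proposal is correct and is essentially the paper's own argument: your field $V$ is exactly one quarter of the paper's choice $V=-2\nabla\delta/\delta+2(n-2)\x/|\x|^2$, and your completing-the-square step is algebraically identical to the paper's Cauchy--Schwarz-with-parameter step at $\varepsilon=2$, both yielding the weight $\frac{(n-2)^2}{|\x|^2}+\frac{1-2\delta\Delta\delta}{\delta^2}+2(n-2)\frac{\x\cdot\nabla\delta}{|\x|^2\delta}$. Your further observation that the stated form with $1+2|\delta\Delta\delta|$ requires $\Delta\delta\le 0$ on the support of $f$ (as holds in the convex case by Remark~\ref{RemSigns}) is exactly right, and is a sign condition the paper's own proof uses silently.
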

\begin{proof}
Let
\[
V(\x) = -2 \frac{\nabla  \delta(\x)}{\delta(\x)} + 2(n-2)
\frac{\x}{|\x|^2}.
\]
Then
\[
\rm{div} V(\x) = \frac{2}{\delta(\x)^2} - \frac{2 \Delta
\delta(\x)}{\delta(\x)} + \frac{2(n-2)^2}{|\x|^2} \ge 0
\]
and
\[
\frac{1}{4}|V(\x)|^2 = \frac{1}{\delta^2} + \frac{(n-2)^2}{|\x|^2}
- 2(n-2) \frac{\x \cdot \nabla \delta}{|\x|^2 \delta}.
\]

For any $\varepsilon > 0,$ and $f\in\Con(\Omega\setminus
\mathcal{R}(\Omega))$
\begin{eqnarray*}
  \int_{\Omega} (\rm{div} V) |f|^2 d \x &=& -2 {\rm{Re}}\left[\int_{\Omega} (V \cdot \nabla f) \overline{f} d \x\right] \\
   & \le & 2 \left(\int_{\Omega} |\nabla f|^2 d\x \right)^{1/2}\left(\int_{\Omega}|V|^2 |f|^2 d\x
   \right)^{1/2}\\
   & \le & \varepsilon^2 \int_{\Omega} |\nabla f |^2 d \x +
   \varepsilon^{-2}\int_{\Omega}|V|^2 |f|^2 d\x.
\end{eqnarray*}
The result follows on choosing $\varepsilon = 2.$

\end{proof}

When $\Omega = B_R$, we have on substituting in Theorem
\ref{ThmP2A}, $ \delta(\x) = R-|\x|,\  \kappa_i(\x) = -1/R,
i=1,2,\cdots ,n-1 $, and so $ \Delta \delta(\x) = -(n-1)/|\x| $
from Lemma \ref{Lem1} (or by direct calculation),
\begin{equation}\label{Eq14}
    \int_{B_R} |\nabla f(\x)|^2 d\x \ge \frac{1}{4} \int_{B_R}
    \left \{ \frac{(n-2)^2}{|\x|^2} + \frac{1}{\delta(\x)^2} +
    \frac{2}{|\x| \delta(\x)}\right \} |f(\x)|^2 d \x
\end{equation}
which is given in Corollary 2 in \cite{AL}. Note that (\ref{Eq14}) is valid for all $f\in\Con(B_R)$ -- see Corollary~\ref{Cor1a}.

\bigskip

The application of Lemma \ref{Lem1} to Theorem \ref{ThmP} also
yields the following Hardy inequality in the complement of a
closed convex domain. Recall that in this case
$\mathcal{R}(\R^n\setminus\bar\Omega)=\emptyset$.

\begin{Theorem}\label{Thm5A} Let $\Omega \subset \R^n, n \ge 2,$
be convex with a $C^2$ boundary. Then for all $f \in
C_0^{\infty}(\R^n \setminus \bar\Omega),$
\begin{equation}\label{Des1}
    \int_{\R^n \setminus \bar\Omega} |\nabla f(\x)|^p d \x \ge
    \left(\frac{p-1}{p}\right)^p\int_{\R^n \setminus \bar\Omega}\left \{1 - \frac{p\tilde{\kappa}\delta}{p-1}\right \} \frac{|f(\x)|^p}{\delta(\x)^p}d
    \x,
\end{equation}
where $  \ \tilde{\kappa} = \sum_{i=1}^{n-1} \frac{\kappa_i}{1+
    \delta \kappa_i} \ge 0.$
\end{Theorem}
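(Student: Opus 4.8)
The plan is to obtain this inequality as an immediate specialization of Theorem~\ref{ThmP} to the exterior domain $\Omega':=\R^n\setminus\bar\Omega$, after two simplifications peculiar to the convex-complement setting. The first is that the ridge is empty, so the restriction to $\Con(\Omega'\setminus\mathcal{R}(\Omega'))$ in Theorem~\ref{ThmP} disappears and the estimate holds for all of $\Con(\Omega')$; the second is that the pointwise Cauchy--Schwarz bound $|\grad\delta\cdot\grad f|\le|\grad f|$ lets me replace the directional-derivative term on the left of~(\ref{EE}) by the full gradient, yielding the cleaner left-hand side claimed in~(\ref{Des1}).

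Concretely, I would first invoke Remark~\ref{Rem4} (together with Remark~\ref{Remskeleton}) to record that for the complement of a closed convex set with $C^2$ boundary one has $\mathcal{S}(\Omega')=\mathcal{R}(\Omega')=\emptyset$. Consequently $\Con(\Omega'\setminus\mathcal{R}(\Omega'))=\Con(\Omega')$ and $\delta\in C^2(\Omega')$ throughout, so Theorem~\ref{ThmP} applies to every $f\in\Con(\R^n\setminus\bar\Omega)$ and delivers~(\ref{EE}) on $\Omega'$. Next I would apply Lemma~\ref{Lem1} to identify $\Delta\delta=\tilde\kappa=\sum_{i=1}^{n-1}\kappa_i/(1+\delta\kappa_i)$ on $\Omega'$, using that the principal curvatures of $\partial\Omega'$ are non-negative (as noted after~(\ref{Eq2.2})); hence $1+\delta\kappa_i\ge1$ and $\tilde\kappa\ge0$. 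Finally, since $\delta$ is $1$-Lipschitz with $|\grad\delta|=1$ a.e.\ on $\Omega'$, the inequality $|\grad\delta\cdot\grad f|\le|\grad\delta|\,|\grad f|=|\grad f|$ upgrades the left-hand side of~(\ref{EE}) to $\int_{\Omega'}|\grad f|^p\,d\x$, which is exactly~(\ref{Des1}).

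I do not expect any genuine obstacle, since every ingredient has already been established earlier in the paper. The only points deserving a word of care are the emptiness of the ridge for a convex complement---so that no exceptional set need be excised and $\delta$ is globally $C^2$---and the a.e.\ identity $|\grad\delta|=1$, which is what licenses discarding the angular component of $\grad f$ when passing from $|\grad\delta\cdot\grad f|$ to $|\grad f|$. The non-negativity of $\tilde\kappa$ is immediate from $\kappa_i\ge0$, so the bracketed weight $1-p\tilde\kappa\delta/(p-1)$ and the accompanying sign assertion can be read off directly.
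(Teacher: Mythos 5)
Your proposal is correct and follows essentially the same route as the paper, which derives Theorem~\ref{Thm5A} precisely by applying Theorem~\ref{ThmP} and Lemma~\ref{Lem1} on $\R^n\setminus\bar\Omega$ and noting that $\mathcal{R}(\R^n\setminus\bar\Omega)=\emptyset$ so that no exceptional set need be removed. The only step the paper leaves implicit, which you correctly supply, is the pointwise bound $|\grad\delta\cdot\grad f|\le|\grad\delta|\,|\grad f|=|\grad f|$ used to pass from the directional-derivative form~(\ref{EE}) to the full gradient in~(\ref{Des1}).
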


Note that if $\Omega = B_{\rho}$, the integrand on the right-hand
side of (\ref{Des1}) is non-negative if and only if
\[
|\x| \le 2\frac{(n-1) \rho}{2n-3}.
\]

The following is another form of Hardy inequality, reminiscent of
that derived in \cite{BE}, Theorem 3.1.

\begin{Theorem}\label{Thm5} Let $\Omega$ be a convex domain in $\R^n$
with a $C^2$ boundary. Then for all
$f\in\Con(\R^n\setminus\bar\Omega)$,
$$
\int_{\R^n\setminus\bar\Omega}\delta^p|\grad\delta\cdot\grad
f|^pd\x\ge \frac{1}{p^p}\int_{\R^n\setminus
\bar\Omega}[1+p\tilde\kappa \delta]|f|^pd\x,
$$
where $  \ \tilde{\kappa} = \sum_{i=1}^{n-1} \frac{\kappa_i}{1+
    \delta \kappa_i} \ge 0.$
\end{Theorem}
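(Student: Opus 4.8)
The plan is to follow the proof of Theorem~\ref{ThmP}, using the divergence identity (\ref{BE1}) with a vector field adapted to the weight $\delta^p$ now sitting on the left-hand side. First note that, since $\Omega$ is convex, Remark~\ref{Remskeleton} gives $\mathcal{R}(\R^n\setminus\bar\Omega)=\emptyset$; hence $\delta\in C^2(\R^n\setminus\bar\Omega)$ by Lemma~\ref{Lem1}, the identity (\ref{BE1}) is available for every $f\in\Con(\R^n\setminus\bar\Omega)$ with no ridge to excise, and the principal curvatures of $\partial(\R^n\setminus\bar\Omega)$ are non-negative so that $\tilde\kappa\ge 0$ throughout.

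The vector field I would take is $V=\delta\,\grad\delta$. Since $|\grad\delta|=1$ and $\Delta\delta=\tilde\kappa$ by Lemma~\ref{Lem1}, a one-line computation gives $\mathrm{div}\,V=|\grad\delta|^2+\delta\,\Delta\delta=1+\delta\tilde\kappa$. Writing $\int$ for $\int_{\R^n\setminus\bar\Omega}$ and feeding this into (\ref{BE1}), then using $\delta\ge 0$, yields
\[
\int(1+\delta\tilde\kappa)|f|^p\,d\x
=-p\,\mathrm{Re}\int\delta(\grad\delta\cdot\grad f)|f|^{p-2}\bar f\,d\x
\le p\int\delta|\grad\delta\cdot\grad f|\,|f|^{p-1}\,d\x.
\]

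Next I would apply Young's inequality $ab\le a^p/p+b^q/q$, $q=p/(p-1)$, to the integrand on the right, split with a parameter $\epsilon$ as $(\epsilon\,\delta|\grad\delta\cdot\grad f|)(\epsilon^{-1}|f|^{p-1})$, to obtain
\[
\int(1+\delta\tilde\kappa)|f|^p
\le \epsilon^p\int\delta^p|\grad\delta\cdot\grad f|^p
+(p-1)\epsilon^{-p/(p-1)}\int|f|^p.
\]
Rearranging gives $\int\delta^p|\grad\delta\cdot\grad f|^p\ge\epsilon^{-p}\int[\,1-(p-1)\epsilon^{-p/(p-1)}+\delta\tilde\kappa\,]|f|^p$, and the proof concludes on choosing $\epsilon=p^{(p-1)/p}$: then $\epsilon^{-p/(p-1)}=1/p$, so the bracket becomes $1/p+\delta\tilde\kappa$ and the prefactor is $\epsilon^{-p}=p^{1-p}$, delivering exactly $p^{-p}(1+p\tilde\kappa\delta)$.

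I do not expect a real obstacle here; the only point meriting attention is the appearance of the factor $p$ multiplying $\tilde\kappa\delta$. This factor is not produced by the vector field, whose divergence carries the balanced weight $1+\delta\tilde\kappa$, but by the Young step: its error term is an \emph{unweighted} $\int|f|^p$, so subtracting $(p-1)/p$ depletes only the constant part, cutting it from $1$ down to $1/p$ while leaving the curvature part intact, and the resulting $1:p$ ratio is precisely what the statement records.
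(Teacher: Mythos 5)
Your proof is correct and follows essentially the same route as the paper: the paper also applies the divergence identity (\ref{BE1}) with the vector field $\grad(\delta^{2})=2\delta\grad\delta$ (twice yours), bounds the right-hand side via H\"older/Young with a parameter $\epsilon$, and then optimizes $\epsilon$ so that only the unweighted $\int|f|^{p}$ term is depleted, yielding the coefficient $\frac{1}{p^{p}}[1+p\tilde\kappa\delta]$. The only differences are cosmetic: your choice $V=\delta\grad\delta$ avoids the paper's factor-of-$2^{p}$ bookkeeping, and you apply Young pointwise rather than H\"older-then-Young.
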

\begin{proof}
The proof follows the lines of that of Theorem 3.1 in \cite{BE}.
From (\ref{BE1}),
\begin{eqnarray*}
   \int_{\R^n\setminus\bar\Omega} (\rm{div} V) |f|^p d\x
   &\le & p \left(\int_{\R^n\setminus\bar\Omega}|V\cdot \nabla f|^p
   d\x\right)^{1/p} \left(\int_{\R^n\setminus\bar\Omega}|f|^p d\x
   \right)^{(p-1)/p} \\
   &\le & \varepsilon^p \int_{\R^n\setminus\bar\Omega}|V\cdot \nabla f|^p
   d\x + (p-1) \varepsilon^{-p/(p-1)}\int_{\R^n\setminus\bar\Omega}|f|^p
   d\x.
\end{eqnarray*}
On choosing $V= \delta^2$, we have
\begin{eqnarray}\label{Eq5}
2^p\epsilon^p\int_{\R^n\setminus\bar\Omega}\delta^p|\grad\delta\cdot\grad
f|^p d\x
&+& (p-1)\epsilon^{-\frac{p}{p-1}}\int_{\R^n\setminus\bar\Omega} |f|^pd\x \nonumber \\
&\ge & \int_{\R^n\setminus\bar\Omega} [\Delta\delta^2]|f|^pd\x \nonumber \\
& = & 2\int_{\R^n\setminus\bar\Omega}[1+ \tilde{\kappa} \delta ]
d\x
\end{eqnarray}
by Lemma 1. Hence, as in (3.6) of \cite{BE},
$$\begin{array}{rl}
2^p\int_{\R^n\setminus\bar\Omega} \delta^p|\grad\delta\cdot \grad
f|^pd\x
\ge& K(\epsilon)\int_{\R^n\setminus\bar\Omega} |f|^pd\x\\
&+2\epsilon^{-p}\int_{\R^n\setminus\bar\Omega} \tilde\kappa \delta
|f|^pd\x
\end{array}
$$
where
$$
K(\epsilon)=2\epsilon^{-p}-(p-1)\epsilon^{-\frac{p^2}{p-1}}
$$
has a maximum value of $(2/p)^p$ at $\epsilon=(p/2)^{(p-1)/p}$.
The proof is completed by making the substitution for this value
of $\epsilon$.

\end{proof}

When $p=2$, it is readily shown that the substitution $ u = \delta
f$ in Theorem \ref{Thm5} yields (\ref{Des1}).

\bigskip

\begin{Example} If $\Omega=B_{\rho}$ in Theorem \ref{Thm5}, then
$$
\int_{\R^n\setminus \overline{B_{\rho}}}(|\x|-\rho)^p|\grad f|^pd\x\ge
\frac{1}{p^p}\int_{\R^n\setminus \overline{B_{\rho}}}\left
[1+p(n-1)\frac{|\x|-\rho}{|\x|}\right]|f|^pd\x
$$
for all $f\in\Con(\R^n\setminus \overline{B_{\rho}})$.
\end{Example}

\bigskip

We have the following analogue of Theorem 1 in \cite{AL} for an
annulus bounded by convex domains.

\begin{Theorem}\label{Thm6} Let $\Omega_1$, $\Omega_2, $ be convex
domains in $\R^n, n\ge 2,$ with $C^2$ boundaries and $
\bar{\Omega}_1 \subset \Omega_2.$ For $\x \in \Omega := \Omega_2
\setminus \bar{\Omega}_1$ denote the distances of $\x$ to $\pd
\Omega_1, \pd \Omega_2$ by $\delta_1, \delta_2,$ respectively.
Then for all $f\in\Con(\Omega\setminus \mathcal{R}(\Omega))$
\begin{eqnarray}\label{Eq6}
  \int_{\Omega_2 \setminus \bar{\Omega}_1} |\nabla f(\x)|^2 d \x  &\ge & \frac{1}{4} \int_{\Omega_2 \setminus \bar{\Omega}_1}\left \{ \frac{(n-1)(n-3)}{|\x|^2}
  + \frac{1}{\delta_1^2} +  \frac{1}{\delta_2^2} \right.\nonumber \\
   & -& \frac{2 \Delta \delta_1}{\delta_1} - \frac{2 \Delta
   \delta_2}{\delta_2}- \frac{2\nabla \delta_1 \cdot \nabla
   \delta_2}{\delta_1 \delta_2} \nonumber \\
   & + & \left.2(n-1)\frac{\x \cdot \nabla \delta_1}{|\x|^2 \delta_1} +2(n-1)\frac{\x \cdot \nabla \delta_2}{|\x|^2
   \delta_2}\right \} |f(\x)|^2 d \x. \nonumber \\
\end{eqnarray}
\end{Theorem}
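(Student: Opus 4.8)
The plan is to run exactly the vector-field argument used in the proof of Theorem~\ref{ThmP2A}, but with a field that sees both boundaries at once. Guided by the single-boundary choice $V=-2\grad\delta/\delta+2(n-2)\x/|\x|^2$ there, I would take
\[
V=-2\frac{\grad\delta_1}{\delta_1}-2\frac{\grad\delta_2}{\delta_2}+2(n-1)\frac{\x}{|\x|^2}.
\]
The two distance terms are forced by the $\delta_1^{-2}$, $\delta_2^{-2}$ and the mixed term $\grad\delta_1\cdot\grad\delta_2/(\delta_1\delta_2)$ appearing in the statement, and the radial coefficient $2(n-1)$ is precisely the value that will reproduce the two cross terms $2(n-1)\,\x\cdot\grad\delta_i/(|\x|^2\delta_i)$.

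First I would compute the divergence. Since $|\grad\delta_i|=1$ one has $\mathrm{div}(\grad\delta_i/\delta_i)=\Delta\delta_i/\delta_i-1/\delta_i^2$, and $\mathrm{div}(\x/|\x|^2)=(n-2)/|\x|^2$, so that
\[
\mathrm{div}\,V=\frac{2}{\delta_1^2}+\frac{2}{\delta_2^2}-\frac{2\Delta\delta_1}{\delta_1}-\frac{2\Delta\delta_2}{\delta_2}+\frac{2(n-1)(n-2)}{|\x|^2}.
\]
Next I would expand $\tfrac14|V|^2$, retaining all three cross products: the product of the two distance terms contributes $+2\,\grad\delta_1\cdot\grad\delta_2/(\delta_1\delta_2)$, each radial cross product contributes $-2(n-1)\,\x\cdot\grad\delta_i/(|\x|^2\delta_i)$, and the squared terms contribute $\delta_i^{-2}$ and $(n-1)^2/|\x|^2$. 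Forming $\mathrm{div}\,V-\tfrac14|V|^2$ then collapses term by term to the bracket in the statement; the only coefficient needing care is that of $|\x|^{-2}$, which becomes $2(n-1)(n-2)-(n-1)^2=(n-1)(n-3)$, exactly as asserted.

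Finally I would substitute into the integrated identity (\ref{BE1}) with $p=2$, namely $\int_\Omega(\mathrm{div}\,V)|f|^2\,d\x=-2\,\mathrm{Re}\int_\Omega(V\cdot\grad f)\overline f\,d\x$, estimate the right-hand side by Cauchy--Schwarz and Young as $\le\varepsilon^2\int_\Omega|\grad f|^2\,d\x+\varepsilon^{-2}\int_\Omega|V|^2|f|^2\,d\x$, and set $\varepsilon=2$, just as in Theorem~\ref{ThmP2A}; rearranging yields the claimed inequality. The step I expect to be delicate is not the algebra but the justification of (\ref{BE1}) for this $V$, which requires $\grad\delta_1$, $\grad\delta_2$, and hence $V$, to be $C^1$ on $\mathrm{supp}\,f$ so that the divergence theorem produces no interior boundary contribution. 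Here $\delta_1$ is $C^2$ throughout $\Omega$, because $\Omega_1$ is convex and so $\mathcal{R}(\R^n\setminus\bar\Omega_1)=\emptyset$ by Remark~\ref{Remskeleton}, whereas $\delta_2$ is only $C^2$ on $\Omega_2\setminus\mathcal{R}(\Omega_2)$ by Lemma~\ref{Lem1}. Thus the essential point to verify is that restricting to $f\in\Con(\Omega\setminus\mathcal{R}(\Omega))$ also keeps $\mathrm{supp}\,f$ clear of the singular set of $\delta_2$, so that the identity (\ref{BE1}) is legitimately applicable with the field $V$ above.
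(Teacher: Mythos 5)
Your proposal is correct and is essentially the paper's own proof: the paper chooses the same vector field $V = 2(n-1)\nabla|\x|/|\x| - 2\nabla\delta_1/\delta_1 - 2\nabla\delta_2/\delta_2$ (identical to yours, since $\nabla|\x|/|\x| = \x/|\x|^2$), applies the same Cauchy--Schwarz/Young estimate based on the identity (\ref{BE1}), and sets $\varepsilon = 2$, while leaving to the reader the algebra for $\mathrm{div}\,V$ and $\frac{1}{4}|V|^2$ that you carry out explicitly (and correctly, including the coefficient $2(n-1)(n-2)-(n-1)^2=(n-1)(n-3)$). The regularity caveat you flag at the end --- whether $\supp f\subset\Omega\setminus\mathcal{R}(\Omega)$ keeps clear of the singular set of $\delta_2$ so that (\ref{BE1}) applies --- is not addressed in the paper's proof either.
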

\begin{proof}
The starting point is again
\[
\int_{\Omega_2 \setminus \bar{\Omega}_1} (\rm{div}V) |f(\x)|^2 d\x
\le \varepsilon^2 \int_{\Omega_2 \setminus \bar{\Omega}_1} |\nabla
f |^2 d \x +
   \varepsilon^{-2}\int_{\Omega_2 \setminus \bar{\Omega}_1}|V|^2 |f|^2 d\x.
\]
Guided by the proof of Corollary 1 in \cite{AL}, the theorem
follows on setting
\[
V= 2(n-1)\frac{\nabla |\x|}{|\x|}-2\frac{\nabla
\delta_1}{\delta_1} -2\frac{\nabla \delta_2}{\delta_2}
\]
and $ \varepsilon = 2.$
\end{proof}

If $\Omega_1 = B_{\rho}, \Omega_2 = B_R,\ R> \rho,$ we have
\[
\Delta \delta_1 = \frac{n-1}{|\x|},\ \ \ \Delta \delta_2 =
-\frac{n-1}{|\x|}
\]
by Lemma \ref{Lem1}, and $\nabla \delta_1 = - \nabla \delta_2 =
\x/|\x|.$ On substituting in (\ref{Eq6}), we derive Corollary 1 in
\cite{AL}, namely,
\begin{equation}\label{Eq7}
    \int_{B_R \setminus B_{\rho}} |\nabla f(\x)|^2 d \x
   \ge
    \frac{1}{4}\int_{B_R \setminus B_{\rho}} \left
    \{\frac{(n-1)(n-3)}{|\x|^2} +\frac{1}{\delta_1^2} +
    \frac{1}{\delta_2^2}+ \frac{2}{\delta_1 \delta_2} \right \}
    |f(\x)|^2 d\x,
\end{equation}
where $\delta_1(\x) = |\x|-\rho, \delta_2(\x) = R-|\x|.$

\bigskip

\section{Non-convex domains}
\subsection{Torus}
We show that Theorem \ref{ThmP2} can be applied to give a
Hardy-type inequality on a torus.

\begin{Corollary}\label{Torus} Let $\Omega\subset\R^3$ be the interior of a ring torus with
minor radius $r$ and major radius $R > 2r$. Then $\Delta\delta <
0$ in $\Omega\setminus \mathcal{R}(\Omega)$ and
\begin{eqnarray}\label{EE2}
\int_\Omega |\grad \delta\cdot \grad f|^pd\x &\ge&
\left(\frac{p-1}{p}\right)^p \int_\Omega \frac{|f|^p}{\delta^p}d\x
\nonumber \\
& +& \left(\frac{p-1}{p}\right)^{p-1}\int_\Omega \left(
\frac{1}{(r-\delta)} - \frac{1}{\sqrt{x_1^2 + x_2^2}}
\right)\frac{|f|^p}{\delta^{p-1}}d\x \nonumber \\
\end{eqnarray}
for all $f\in\Con(\Omega)$, where $\x \in \Omega$ has co-ordinates
$(x_1, x_2,x_3)$, and the last integrand is positive.
\end{Corollary}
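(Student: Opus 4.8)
The plan is to invoke Theorem~\ref{ThmP2} directly: the solid torus is not convex, so Corollary~\ref{Cor2} does not apply, but I expect both structural hypotheses (\ref{Sign}) and (\ref{Sign2}) to hold. First I would fix cylindrical coordinates $(\rho,\phi,z)$, $\rho=\sqrt{x_1^2+x_2^2}$, and write $\Omega=\{(\rho-R)^2+z^2<r^2\}$. For an interior point, the squared distance to the generating circle, viewed as a function of the azimuthal angle of the boundary point, is minimised on the same meridian half-plane (using $\rho\ge R-r>0$); hence the near point lies in that half-plane, $\delta(\x)=r-\sqrt{(\rho-R)^2+z^2}$, and the skeleton is the central circle $\{\rho=R,\ z=0\}$, which is closed and therefore equals $\mathcal{R}(\Omega)$. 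Parametrising the near point by its meridian angle $\theta$ gives $\rho=R+(r-\delta)\cos\theta$.

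Next I would compute $\Delta\delta$ from Lemma~\ref{Lem1}. With respect to the inward normal (pointing to the tube axis) the two principal curvatures of $\pd\Omega$ at the near point are $\kappa_1=-1/r$ (meridian circle) and $\kappa_2=-\cos\theta/(R+r\cos\theta)$ (parallel circle); substituting into (\ref{Eq2.3}) and simplifying with $\rho=R+(r-\delta)\cos\theta$ yields $\Delta\delta=-\tfrac{1}{r-\delta}-\tfrac{\cos\theta}{\rho}$. As a check, the same value follows by applying the cylindrical Laplacian to $\delta=r-\sqrt{(\rho-R)^2+z^2}$. Here is where the hypothesis $R>2r$ enters: since $\rho\ge R-(r-\delta)$ and $R>2r\ge 2(r-\delta)$, one has $\rho>r-\delta$, so $\tfrac{1}{r-\delta}-\tfrac{1}{\rho}>0$ and, a fortiori, $\Delta\delta<0$ on all of $\Omega\setminus\mathcal{R}(\Omega)$.

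I would then verify the hypotheses of Theorem~\ref{ThmP2}. Condition (\ref{Sign2}) is immediate: $\delta>0$ and $\Delta\delta<0$ give $\delta\Delta\delta<0\le(p-1)/p$ for every $p>1$. For (\ref{Sign}) take $S_\epsilon=\{\x\in\Omega:\delta(\x)>r-\epsilon\}$, a decreasing family of tubular neighbourhoods of $\mathcal{R}(\Omega)$ with smooth boundaries (the level tori $\{\delta=r-\epsilon\}$) shrinking to the ridge as $\epsilon\downarrow0$. On $\pd S_\epsilon$ the inward normal $\eta_\epsilon$ points toward increasing $\delta$, i.e. $\eta_\epsilon=\grad\delta$, whence $\grad\delta\cdot\eta_\epsilon=|\grad\delta|^2=1\ge0$. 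Theorem~\ref{ThmP2} then upgrades (\ref{EE}) to all $f\in\Con(\Omega)$.

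Finally I would rewrite the right-hand side of (\ref{EE}): the bracket produces $\left(\tfrac{p-1}{p}\right)^p\delta^{-p}+\left(\tfrac{p-1}{p}\right)^{p-1}(-\Delta\delta)\,\delta^{1-p}$, using $\left(\tfrac{p-1}{p}\right)^p\tfrac{p}{p-1}=\left(\tfrac{p-1}{p}\right)^{p-1}$. Since the coefficient of $(-\Delta\delta)$ is non-negative and $-\Delta\delta=\tfrac{1}{r-\delta}+\tfrac{\cos\theta}{\rho}\ge\tfrac{1}{r-\delta}-\tfrac{1}{\rho}$ (because $\cos\theta\ge-1$), I obtain the stated inequality, with $\rho=\sqrt{x_1^2+x_2^2}$ and the last integrand positive by the bound of the previous paragraph. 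The part demanding the most care is the sign and geometry bookkeeping: confirming that near points stay on one meridian so that $\delta$ and $\mathcal{R}(\Omega)$ are as claimed, getting the inward-normal signs of $\kappa_1,\kappa_2$ right, and seeing precisely how $R>2r$ forces both $\Delta\delta<0$ and the positivity of $\tfrac{1}{r-\delta}-\tfrac{1}{\rho}$; verifying (\ref{Sign}) for the shrinking level tori is the other point to get right.
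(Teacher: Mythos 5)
Your proposal is correct and follows essentially the same route as the paper: the same tubular neighbourhoods $S_\epsilon$ of the central circle (the paper writes them as $\{\rho(\x)<\epsilon\}$, which is your $\{\delta>r-\epsilon\}$), the same principal curvatures $\kappa_1=-1/r$, $\kappa_2=-\cos\theta/(R+r\cos\theta)$ fed into Lemma~\ref{Lem1}, the same use of $R>2r$ to get $\Delta\delta<0$ and the positivity of $\tfrac{1}{r-\delta}-\tfrac{1}{\sqrt{x_1^2+x_2^2}}$, and the same application of Theorem~\ref{ThmP2}. Your extra checks (near points staying in one meridian half-plane, the explicit verification of (\ref{Sign2}), the cylindrical-Laplacian cross-check) are details the paper leaves implicit, not a different method.
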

\begin{proof} The domain $\Omega$ under consideration is the ``doughnut-shaped" domain generated by rotating a disc of radius $r$ about a co-planar axis at a distance $R$ from the center of the disc.
The fact that $\Delta\delta\le 0$ was proved by D.H.~Armitage and
\"U.~Kuran~\cite{AK}. We give a different proof here which meets
our purposes using a curvature argument.

The ridge of the torus is
$$
 \mathcal{R}(\Omega)=\{\x: \rho(\x)=0\},
$$
where $\rho(\x)$ is the distance from the point $\x$ in $\Omega$
to the center of the cross-section and $\delta(\x)=r-\rho(\x)$.
Moreover, in the notation of Theorem \ref{ThmP2},
$$
S_\epsilon =\{\x:\rho(\x)<\epsilon\},
$$
and points on the surface of $ S_\epsilon$ are on the level
surface $\rho(\x)=\epsilon$, so that the unit inward normal to
$\pd S_{\varepsilon}$ is
$\eta_{\varepsilon}=-\grad\rho(\x)/|\grad\rho(\x)|=\grad\delta$.
Therefore $\grad \delta\cdot\grad \eta >0$.

For $ \x \in \Omega \setminus \mathcal{R}(\Omega),$ let $\y =
N(\x)= (y_1,y_2,y_3)$ have the parametric co-ordinates
$$
\begin{array}{l}
y_1=(R+r\cos s^2)\cos s^1\\
y_2=(R+r\cos s^2)\sin s^1\\
y_3=r\sin s^2\end{array},
$$
where $s^1,s^2\in (-\pi,\pi]$. The principal curvatures at $\y \in
\pd \Omega$ are
$$
\kappa_1 =-\frac{1}{r}, \q \kappa_2=-\frac{\cos s^2}{R+r\cos s^2},
$$
e.g., see Kreyszig~\cite{K}, p.135, and so, by Lemma 1,
\begin{eqnarray*}
\Delta\delta(\x)=\sum_{i=1}^{2}\left(\frac{\kappa_i}{1+\delta\kappa_i}\right)(\y)
&=& -\frac{R+2(r-\delta)\cos s^2}{(r-\delta)(R+(r-\delta)\cos s^2)
}\\
&=& \ -\frac{\sqrt{x_1^2 +x_2^2} + (r-\delta)\cos
s^2}{(r-\delta)\sqrt{x_1^2 + x_2^2}} < 0
\end{eqnarray*}
since $ R + r \cos s^2 = \sqrt{x_1^2 + x_2^2} + \delta(\x) \cos
s^2$ and $R> 2r.$ The inequality (\ref{EE2}) follows from Theorem
\ref{ThmP2}.

\end{proof}

\bigskip

\subsection{1-sheeted Hyperboloid}
Next, we apply Theorem \ref{ThmP} to the 1-sheeted hyperboloid
\begin{equation} \label{sh}
\Omega=\{(x_1,x_2,x_3)\in\R^3: x_1^2+x_2^2<1+x_3^2\}.
\end{equation}
This is non-convex and unbounded with infinite volume and infinite
interior diameter $D_{int}(\Omega)$. To calculate the principal
curvatures, we choose the following parametric co-ordinates for
$\y \in \partial\Omega$:
$$\begin{array}{rl}
y_1(s,t)=&\sqrt{s^2+1}\cos t,\\
y_2(s,t)=&\sqrt{s^2+1}\sin t,\\
y_3(s,t)=& s,
\end{array}
$$
for $t\in [0,2\pi)$ and $s\in (-\infty,\infty)$. A calculation
then gives (see \cite{K}, p. 132)
$$
\kappa_1= - \frac{1}{[2s^2+1]^{3/2}}, \q
 \kappa_2=
\frac{1}{\sqrt{2s^2+1}},
$$
and if $\y = N(\x),\ \x \in \Omega\setminus \mathcal{R}(\Omega) $,
then by Lemma \ref{Lem1},
\begin{equation} \label {tk}\Delta\delta
(\x)=\tilde\kappa:=\sum_{i=1}^2\frac{\kappa_i}{1+\delta\kappa_i}=-\frac{1}{w^3
- \delta} + \frac{1}{w+\delta},
\end{equation}
where $w = \sqrt{2s^2 +1}$ is the distance of $\y$ from the
origin, and the ridge is
$\mathcal{R}(\Omega)=\{(x_1,x_2,x_3):x_1=x_2=0,\ x_3\in
(-\infty,\infty)\}$. Therefore $\Delta\delta (\x)$ changes sign in
$\Omega.$

To find $\y = N(\x)$, we first determine the vector normal to $\pd
\Omega$ at $\y$, namely
\begin{eqnarray*}
\y_s \times \y_t &=& \left|\begin{array}{ccc} i & j & k \\
\frac{s}{\sqrt{s^2+1}} \cos t & \frac{s}{\sqrt{s^2+1}} \sin t & 1
\\ - \sqrt{s^2+1} \sin t &  \sqrt{s^2+1} \cos t & 0 \end{array}\right
| \\
&=& [-\sqrt{s^2+1} \cos t] i + [-\sqrt{s^2+1} \sin t] j + s k.
\end{eqnarray*}
The inward unit normal vector at $\y$ is therefore
\[
{\bf{n}} = \{[-\sqrt{s^2+1} \cos t] i + [-\sqrt{s^2+1} \sin t] j +
s k \}/ \sqrt{2s^2+1}.
\]
The distance from $\y$ to the ridge point $p(\x)$ of $\x$ (see
Section 2) is given by $ \sqrt{s^2+1}/ \cos \theta,$ where $ \cos
\theta = ({\bf{z}} \cdot {\bf{n}}) /|{\bf{z}}|,$ and
\[
\z = [-\sqrt{s^2+1} \cos t] i + [-\sqrt{s^2+1} \sin t] j.
\]
Hence
\[ \sqrt{s^2+1}/ \cos \theta = \sqrt{2s^2 +1} = w.
\]
Consequently, the near point of $\x$ is the point on the boundary
of $\Omega$ which is equidistant from the ridge point $p(\x)$ of
$\x$ and the origin.

We therefore have from Theorem \ref{ThmP}.

\begin{Corollary} Let $\Omega\subset\R^3$ be the 1-sheeted
hyperboloid (\ref{sh}). Then, for all $f\in\Con(\Omega\setminus
\mathcal{R}(\Omega)),$
\begin{equation}\label{hyperboloid}\begin{array}{rl}
\int_\Omega |\grad \delta\cdot \grad f|^pd\x \ge&
\left(\frac{p-1}{p}\right)^p \int_\Omega \frac{|f|^p}{\delta^p}d\x
- \left(\frac{p-1}{p}\right)^{p-1}\int_\Omega \tilde\kappa
\frac{|f|^p}{\delta^{p-1}}d\x,
\end{array}
\end{equation}
where $\tilde{\kappa}$ is given in (\ref{tk}), with $w = |\y| =
\delta (p(\x))$, $\y = N(\x) $ and $p(\x)$ the ridge point of
$\x$.
\end{Corollary}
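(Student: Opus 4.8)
The plan is to obtain this inequality as a direct specialization of Theorem~\ref{ThmP}, since all the geometric data have already been assembled in the discussion preceding the statement. First I would verify that the hyperboloid $\Omega$ in (\ref{sh}) satisfies the hypotheses of Theorem~\ref{ThmP}: its boundary is real-analytic, hence certainly smooth enough for Green's formula, and its ridge $\mathcal{R}(\Omega)$ is the $x_3$-axis, which is closed relative to $\Omega$. The unboundedness of $\Omega$ causes no difficulty, because $f$ has compact support in $\Omega\setminus\mathcal{R}(\Omega)$, so the identity (\ref{BE1}) underlying Theorem~\ref{ThmP} holds with no boundary contribution. Consequently Theorem~\ref{ThmP} applies verbatim and gives, for all $f\in\Con(\Omega\setminus\mathcal{R}(\Omega))$,
\[
\int_\Omega |\grad\delta\cdot\grad f|^p\,d\x \ge \left(\frac{p-1}{p}\right)^p \int_\Omega \left\{1 - \frac{p\delta\,\Delta\delta}{p-1}\right\}\frac{|f|^p}{\delta^p}\,d\x.
\]

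The second step is purely algebraic. I would split the brace and absorb one factor of $\delta$ into the denominator: the contribution $-\frac{p\delta\,\Delta\delta}{p-1}\cdot\frac{1}{\delta^p}$ becomes $-\frac{p}{p-1}\,\Delta\delta\,\frac{1}{\delta^{p-1}}$, while the prefactor collapses via $\left(\frac{p-1}{p}\right)^p\cdot\frac{p}{p-1}=\left(\frac{p-1}{p}\right)^{p-1}$. This rewrites the inequality as
\[
\int_\Omega |\grad\delta\cdot\grad f|^p\,d\x \ge \left(\frac{p-1}{p}\right)^p\int_\Omega\frac{|f|^p}{\delta^p}\,d\x - \left(\frac{p-1}{p}\right)^{p-1}\int_\Omega \Delta\delta\,\frac{|f|^p}{\delta^{p-1}}\,d\x.
\]
Finally I would invoke Lemma~\ref{Lem1} together with the explicit principal-curvature computation carried out above, which identifies $\Delta\delta=\tilde\kappa$ with $\tilde\kappa$ given by (\ref{tk}) in terms of $\delta$ and $w=|\y|=\delta(p(\x))$ at the near point $\y=N(\x)$. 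Substituting $\Delta\delta=\tilde\kappa$ then yields exactly (\ref{hyperboloid}).

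There is essentially no analytic obstacle here; the content of the corollary lies entirely in the curvature bookkeeping, which is already in place. The only points requiring a little care are confirming that the expression (\ref{tk}) for $\tilde\kappa$ is valid pointwise on $\Omega\setminus\mathcal{R}(\Omega)$—which rests on the identification of the near point established in the preceding paragraphs, namely that $\y=N(\x)$ is the boundary point equidistant from the origin and from the ridge point $p(\x)$—and noting that, in contrast to the convex case, $\tilde\kappa$ changes sign on $\Omega$, so no positivity of the resulting right-hand integrand is asserted.
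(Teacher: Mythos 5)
Your proposal is correct and follows exactly the paper's route: the paper derives the corollary by applying Theorem~\ref{ThmP} directly (the statement is introduced with ``We therefore have from Theorem~\ref{ThmP}''), using the curvature computation (\ref{tk}) via Lemma~\ref{Lem1} to identify $\Delta\delta=\tilde\kappa$, and the same algebraic split $\left(\frac{p-1}{p}\right)^p\cdot\frac{p}{p-1}=\left(\frac{p-1}{p}\right)^{p-1}$ to separate the two terms. Your additional remarks on the hypotheses (smoothness, closedness of the ridge, compact support handling unboundedness) and on the sign change of $\tilde\kappa$ are consistent with the paper's discussion.
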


\bigskip

\subsection{Doubly connected domains}
A domain $\Omega \subset \R^2 \equiv \C$ is \textit{doubly
connected} if its boundary is a disjoint union of 2 simple curves.
If it has a smooth boundary then it can be mapped conformally onto
an annulus $ \Omega_{\rho,R} = B_R \setminus \overline{B_{\rho}} =
\{z \in \C: \rho<|z| < R\},$ for some $\rho,R;$ see \cite{Wen},
Theorem 1.2.
\begin{Lemma}\label{Lem2} Let $\Omega_1\subset\Omega_2\subset
\C$ and $B_\rho\subset B_R\subset\C$, $0<\rho<R$, where $B_r$ is
the disc of radius $r$ centered at the origin.  Let
$$
F:\Omega_2\setminus\bar\Omega_1\to B_R\setminus \overline{B_\rho}
$$
be analytic and univalent. Then for $\z=x_1+ix_2$,
$\x=(x_1,x_2)\in\Omega_2\setminus \bar{\Omega}_1$,
\begin{equation} \label{E2}
\mathfrak{F}(\z):= -\frac{|F'(\z)|^2}{|F(\z)|^2}+|F'(\z)|^2\left
\{\frac{1}{|F(\z)|-\rho}+\frac{1}{R-|F(\z)|}\right\}^2
\end{equation}
is invariant under scaling, rotation, and inversion. Hence,
$\mathfrak{F}$ does not depend on the choice of the mapping $F$,
but only on the geometry of $\Omega_2\setminus\bar{\Omega}_1$.
\end{Lemma}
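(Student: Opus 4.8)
The plan is to deduce the $F$-independence of $\mathfrak F$ from the invariance of \eqref{E2} under three elementary modifications of the map, and to check each invariance by a short direct computation. The reduction rests on the standard fact that the modulus $R/\rho$ of an annulus is a conformal invariant: if $F_1$ and $F_2$ are two univalent analytic maps of $\Omega_2\setminus\bar\Omega_1$ onto annuli $A(\rho_1,R_1)$ and $A(\rho_2,R_2)$, then necessarily $R_1/\rho_1=R_2/\rho_2$, and $\phi:=F_2\circ F_1^{-1}$ is a conformal bijection between these annuli. Every such $\phi$ has the form $w\mapsto a\,w$ or $w\mapsto b/w$ for suitable constants $a,b$, so $F_2$ is obtained from $F_1$ by composing with a scaling, a rotation, and possibly the inversion $w\mapsto 1/w$. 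It therefore suffices to prove that $\mathfrak F$ is unchanged when $F$ is replaced by $\lambda F$, by $e^{i\theta}F$, or by $c/F$ (with $\lambda,c>0$), the parameters $\rho,R$ being simultaneously replaced by those of the new image annulus.

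The rotation and scaling cases are routine. For $G=e^{i\theta}F$ one has $|G'|=|F'|$ and $|G|=|F|$ and the image annulus is unchanged, so every quantity occurring in \eqref{E2} is untouched. For $G=\lambda F$ the image annulus becomes $A(\lambda\rho,\lambda R)$; since $|G'|^2=\lambda^2|F'|^2$ and $|G|=\lambda|F|$, the first term $-|G'|^2/|G|^2$ reproduces $-|F'|^2/|F|^2$, while in the bracket the common factor $\lambda$ may be extracted from the denominators $\lambda|F|-\lambda\rho$ and $\lambda R-\lambda|F|$, producing a $\lambda^{-2}$ that cancels the $\lambda^2$ from $|G'|^2$. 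Hence $\mathfrak F$ is scaling- and rotation-invariant.

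The inversion is the only case requiring care, and I expect it to be the main obstacle. Put $G=c/F$ with $c>0$; then $|G|=c/|F|$ ranges over $(c/R,\,c/\rho)$, so $G$ maps onto $A(\rho',R')$ with $\rho'=c/R$, $R'=c/\rho$, and indeed $R'/\rho'=R/\rho$. From $G'=-cF'/F^2$ we get $|G'|^2=c^2|F'|^2/|F|^4$, so that $-|G'|^2/|G|^2=-|F'|^2/|F|^2$ and the first term of \eqref{E2} is preserved. Writing $u:=|F|$, the two reciprocals in the bracket for $G$ become $uR/[c(R-u)]$ and $\rho u/[c(u-\rho)]$, so the bracket equals $(u/c)\bigl(R/(R-u)+\rho/(u-\rho)\bigr)$. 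The computation then hinges on the partial-fraction identities
\[
\frac{R}{R-u}+\frac{\rho}{u-\rho}=\frac{u(R-\rho)}{(R-u)(u-\rho)},\qquad
\frac{1}{u-\rho}+\frac{1}{R-u}=\frac{R-\rho}{(u-\rho)(R-u)} .
\]
Multiplying the squared bracket by $|G'|^2=c^2|F'|^2/u^4$, all powers of $c$ and $u$ cancel, and the second term for $G$ reduces precisely to $|F'|^2\bigl(1/(u-\rho)+1/(R-u)\bigr)^2$, the original second term.

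Combining the three invariances with the reduction of the first paragraph shows that $\mathfrak F$ takes the same value for every univalent conformal $F$, hence depends only on the geometry of $\Omega_2\setminus\bar\Omega_1$. The only non-routine ingredient is the inversion step, and within it the single partial-fraction identity displayed above; everything else is bookkeeping of moduli and homogeneity.
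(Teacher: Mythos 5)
Your proof is correct and takes essentially the same route as the paper: the paper likewise verifies invariance under scaling, rotation, and inversion by direct computation (the inversion case via the same partial-fraction manipulation, written there as $F=1/G$), and then appeals to the classification of conformal maps between annuli (citing Krantz, p.~133) exactly as in your reduction step. Your write-up simply makes explicit the scaling/rotation checks and the annulus-classification argument that the paper calls ``straightforward'' or delegates to the reference.
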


\begin{proof}
The fact that $\mathfrak F$ is invariant under scaling and
rotations is straightforward. To see that it is also invariant
under inversions suppose that $F(\z)=1/G(\z)$. Then, under
inversion $\mathfrak F(\z)$ becomes
$$\begin{array}{l}
-\frac{|G'(\z)|^2}{|G(\z)|^2}+\frac{|G'(\z)|^2}{|G(\z)|^4}
\left\{\frac{1}{\frac{1}{|G(\z)|}-\rho^{-1}}+\frac{1}{R^{-1}-\frac{1}{|G(\z)|}}\right\}^2\\
=-\frac{|G'(\z)|^2}{|G(\z)|^2}+\frac{|G'(\z)|^2}{|G(\z)|^2}
\left\{\frac{\rho}{\rho-|G(\z)|}+\frac{R}{|G(\z)|-R}\right\}^2\\
=-\frac{|G'(\z)|^2}{|G(\z)|^2}+\frac{|G'(\z)|^2}{|G(\z)|^2}
\left\{\frac{(\rho-R)|G(\z)|}{(\rho-|G(\z)|)(|G(\z)|-R)}\right\}^2\\
=-\frac{|G'(\z)|^2}{|G(\z)|^2}+|G'(\z)|^2
\left\{\frac{1}{\rho-|G(\z)|}+\frac{1}{|G(\z)|-R}\right\}^2
\end{array}
$$
implying that $\mathfrak{F}$ is invariant under inversions. The
rest of the lemma follows from \cite{Krantz}, p. 133.

\end{proof}

In applying the last Lemma we regard $\Omega_1$, $\Omega_2$ as
domains in $\R^2$ with $\z=x+iy$ and $\x=(x,y)$.
\begin{Theorem}\label{Thm3}
For $\Omega:=\Omega_2\setminus\bar{\Omega}_1\subset\R^2$,
$$
\int_\Omega |\grad u(\x)|^2d\x \ge \frac14 \int_\Omega
\mathfrak{F}(\x)|u(\x)|^2d\x.
$$
\end{Theorem}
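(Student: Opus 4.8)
The plan is to transplant the problem to the concentric annulus $\Omega_{\rho,R}=B_R\setminus\overline{B_\rho}$ by means of the conformal map supplied by Lemma~\ref{Lem2}, apply the Hardy inequality already available there, and transplant the resulting bound back; the whole scheme rests on the conformal invariance of the Dirichlet integral in the plane. Fix an analytic univalent map $F:\Omega\to\Omega_{\rho,R}$ and, given $u\in\Con(\Omega)$, define $v:=u\circ F^{-1}$ on $\Omega_{\rho,R}$, so that $u(\z)=v(F(\z))$. Since $F$ is a diffeomorphism of the open set $\Omega$ onto the open annulus, $\supp v=F(\supp u)$ is a compact subset of $\Omega_{\rho,R}$; hence $v\in\Con(\Omega_{\rho,R})$ and no regularity of $F$ up to $\pd\Omega$ is required.

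First I would record the conformal invariance of the Dirichlet integral. Since $F$ is analytic, the Cauchy--Riemann equations give $|\grad_\z u|^2=|\grad_\w v|^2\,|F'(\z)|^2$, while the Jacobian of $\w=F(\z)$ equals $|F'(\z)|^2$; therefore
\[
\int_\Omega|\grad u(\x)|^2\,d\x=\int_{\Omega_{\rho,R}}|\grad v(\w)|^2\,d\w .
\]
Next I would invoke the annular Hardy inequality, namely the specialization of Theorem~\ref{Thm6} to $\Omega_1=B_\rho$, $\Omega_2=B_R$, $n=2$. Writing $\delta_1=|\w|-\rho$, $\delta_2=R-|\w|$, the identities $(n-1)(n-3)=-1$ and $\tfrac{1}{\delta_1^2}+\tfrac{1}{\delta_2^2}+\tfrac{2}{\delta_1\delta_2}=\bigl(\tfrac{1}{|\w|-\rho}+\tfrac{1}{R-|\w|}\bigr)^2$ recast it as
\[
\int_{\Omega_{\rho,R}}|\grad v(\w)|^2\,d\w\ge\frac14\int_{\Omega_{\rho,R}}\left\{-\frac{1}{|\w|^2}+\left(\frac{1}{|\w|-\rho}+\frac{1}{R-|\w|}\right)^2\right\}|v(\w)|^2\,d\w .
\]
I would then undo the change of variables through $\w=F(\z)$, using $d\w=|F'(\z)|^2\,d\z$, $|v(\w)|^2=|u(\z)|^2$ and $|\w|=|F(\z)|$. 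The factor $|F'(\z)|^2$ coming from the area element multiplies the bracketed weight, turning it precisely into $\mathfrak{F}(\z)$ as defined in \eqref{E2}; combining the three displays yields the assertion.

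The one delicate point is that Theorem~\ref{Thm6} was stated for test functions in $\Con(\Omega_{\rho,R}\setminus\mathcal{R}(\Omega_{\rho,R}))$, whereas I need the inequality for every $v\in\Con(\Omega_{\rho,R})$, since $\supp v$ may straddle the ridge (the circle $|\w|=(\rho+R)/2$). This is harmless here: the vector field driving the proof of Theorem~\ref{Thm6} is assembled from $\delta_1$, $\delta_2$ and $|\w|$, all of which are smooth on the whole open annulus, so the underlying integration by parts is valid for all $v\in\Con(\Omega_{\rho,R})$ and a standard cut-off/density argument removes the ridge restriction. I expect this density step to be the main, though routine, obstacle; everything else is the bookkeeping of the two changes of variables. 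Finally, Lemma~\ref{Lem2} guarantees that $\mathfrak{F}$ is independent of the chosen $F$ (in particular insensitive, by its inversion invariance, to whether $F$ interchanges the two boundary components), so the right-hand side is intrinsic to $\Omega$ and the statement is well posed.
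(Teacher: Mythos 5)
Your proposal is correct and follows essentially the same route as the paper: conformal transplantation of the Dirichlet integral to the round annulus, the annular Hardy inequality of \cite{AL} (Corollary 1), the Jacobian change of variables, and Lemma~\ref{Lem2} for well-posedness. The only cosmetic difference is that the paper quotes the annulus inequality from \cite{AL} directly, where it is stated for all of $H_0^1(B_R\setminus\overline{B_\rho})$, whereas you re-derive it from Theorem~\ref{Thm6}; your observation that the ridge restriction disappears in this special case (because $\delta_1$, $\delta_2$ and $|\w|$ are smooth on the whole open annulus, so the integration by parts behind Theorem~\ref{Thm6} needs no restriction) is sound and makes the argument self-contained.
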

\begin{proof}
From Corollary~1 of \cite{AL}  it follows that for all $u\in
H_0^1(B_R\setminus \overline{B_\rho})$,
$$
\int_{B_R\setminus \overline{B_\rho}}|\grad u(\y)|^2d\y \ge \frac14
\int_{B_R\setminus \overline{B_\rho}}\left
[\frac{-1}{|\y|^2}+\left(\frac{1}{\delta_\rho(\y)}
+\frac{1}{\delta_R(\y)}\right)^2\right]|u(\y)|^2d\y,
$$
where $\delta_\rho(\y):=|\y|-\rho$ and $\delta_R(\y):=R-|\y|$. Let $F: \Omega \rightarrow \Omega_{\rho,R}$ be analytic and univalent, and set
$\y=F(\x)$, with $\y=(y_1,y_2)$, $\x=(x_1,x_2)$. Then, with $F'$
denoting the complex derivative,
$$
d\y=\left|
\det\left(\frac{\partial(y_1,y_2)}{\partial(x_1,x_2)}\right
)\right|d\x=|F'(\x)|^2d\x,
$$
$$
\grad_\x u=\grad_\y
u\left[\frac{\partial(y_1,y_2)}{\partial(x_1,x_2)}\right]^t,
$$
implying that
$$
|\grad_\x u|^2=|\grad_\y u|^2|F'(\x)|^2.
$$
The theorem follows from Lemma \ref{Lem2}.

\end{proof}

\bigskip

\begin{Example} Let $\Phi(z) = (z-1)(z+1)$ and
\[
\Omega = \{z: \rho^2 < |\Phi(z)| < R^2 \}
\]
for $0<\rho <R.$ The function $F(z)= \sqrt{\Phi(z)}$ is analytic
and univalent in $\Omega$ and
\[
F: \Omega \rightarrow \Omega_{\rho,R}.
\]
A calculation gives
\begin{eqnarray*}
  \mathfrak{F}(z) &=& -\frac{|z|^2}{|z^2-1|^2} \\
   &+& \frac{|z|^2}{|z^2-1|}\frac{(R-\rho)^2}{(\sqrt{|z|^2-1} -
   \rho)^2(R-\sqrt{|z|^2-1})^2}.
\end{eqnarray*}
\end{Example}

Finally, we refer the reader to further developments along these lines in \cite{LLL}.

\bibliographystyle{amsalpha}

\end{document}